\documentclass{article}
\usepackage[latin1]{inputenc}
\usepackage{amsmath}
\usepackage{amsthm,amssymb,amsfonts}
\usepackage{graphicx}
\usepackage{verbatim}
\usepackage{color,cite}
\usepackage{latexsym}
\usepackage{lscape}
\usepackage[T1]{fontenc}
\usepackage[all,cmtip]{xy}
\usepackage{tikz}

\bibliographystyle{amsalpha}

\theoremstyle{plain}
\newtheorem{theorem}{Theorem}[section]
\newtheorem{lemma}[theorem]{Lemma}
\newtheorem{proposition}[theorem]{Proposition}
\newtheorem{corollary}[theorem]{Corollary}

\newtheorem*{definition}{Definition}

\newtheorem*{warning}{Warning}

\newtheorem*{remark}{Remark}
\newtheorem{convention}{Convention}

\title{The Cantor-Bendixson Rank of Certain Bridgeland-Smith Stability Conditions}
\author{David Aulicino\thanks{This material is based upon work supported by the ERC Starting Grant ``Quasiperiodic'' of Artur Avila, and later by the National Science Foundation under Award No. DMS - 1204414.}}
\date{}

\begin{document}

\newcommand{\splin}{$\text{SL}_2(\mathbb{R})$}
\newcommand{\spolin}{$\text{SO}_2(\mathbb{R})$}
\newcommand{\TeichDisk}{$\Phi(\text{SL}_2(\mathbb{R})(X,\omega))$}
\newcommand{\RankOne}{$\mathcal{D}_g (1)$}
\newcommand{\nc}{\newcommand}
\nc\bB{\mathbb{B}}
\nc\bC{\mathbb{C}}
\nc\bD{\mathbb{D}}
\nc\bE{\mathbb{E}}
\nc\bF{\mathbb{F}}
\nc\bG{\mathbb{G}}
\nc\bH{\mathbb{H}}
\nc\bI{\mathbb{I}}
\nc{\bJ}{\mathbb{J}}
\nc\bK{\mathbb{K}}
\nc\bL{\mathbb{L}}
\nc\bM{\mathbb{M}}
\nc\bN{\mathbb{N}}
\nc\bO{\mathbb{O}}
\nc\bP{\mathbb{P}}
\nc\bQ{\mathbb{Q}}
\nc\bR{\mathbb{R}}
\nc\bS{\mathbb{S}}
\nc\bT{\mathbb{T}}
\nc\bU{\mathbb{U}}
\nc\bV{\mathbb{V}}
\nc\bW{\mathbb{W}}
\nc\bY{\mathbb{Y}}
\nc\bX{\mathbb{X}}
\nc\bZ{\mathbb{Z}}

\nc\cA{\mathcal{A}}
\nc\cB{\mathcal{B}}
\nc\cC{\mathcal{C}}
\nc\cD{\mathcal{D}}
\nc\cE{\mathcal{E}}
\nc\cF{\mathcal{F}}
\nc\cG{\mathcal{G}}
\nc\cH{\mathcal{H}}
\nc\cI{\mathcal{I}}
\nc{\cJ}{\mathcal{J}}
\nc\cK{\mathcal{K}}
\nc\cM{\mathcal{M}}
\nc\cN{\mathcal{N}}
\nc\cO{\mathcal{O}}
\nc\cP{\mathcal{P}}
\nc\cQ{\mathcal{Q}}
\nc\cS{\mathcal{S}}
\nc\cT{\mathcal{T}}
\nc\cU{\mathcal{U}}
\nc\cV{\mathcal{V}}
\nc\cW{\mathcal{W}}
\nc\cY{\mathcal{Y}}
\nc\cX{\mathcal{X}}
\nc\cZ{\mathcal{Z}}

\maketitle

\begin{abstract}
We provide a novel proof that the set of directions that admit a saddle connection on a meromorphic quadratic differential with at least one pole of order at least two is closed, which generalizes a result of Bridgeland and Smith, and Gaiotto, Moore, and Neitzke.  Secondly, we show that this set has finite Cantor-Bendixson rank and give a tight bound.  Finally, we present a family of surfaces realizing all possible Cantor-Bendixson ranks.  The techniques in the proof of this result exclusively concern Abelian differentials on Riemann surfaces, also known as translation surfaces.  The concept of a ``slit translation surface'' is introduced as the primary tool for studying meromorphic quadratic differentials with higher order poles.
\end{abstract}

\tableofcontents

\section{Introduction}

The work of \cite{BridgelandSmithStabConds} drew a deep connection between spaces of meromorphic quadratic differentials with poles and simple zeros and spaces of stability conditions on certain categories.  These differentials with some additional assumptions are called GMN-differentials in \cite{BridgelandSmithStabConds}.  The saddle connections of the quadratic differentials correspond to stable objects of the stability conditions.  Furthermore, wall-crossings occur along a subset of the set of directions (called phases in \cite{BridgelandSmithStabConds}) that admit a saddle connection.  The result of \cite{BridgelandSmithStabConds} provides a new justification for studying meromorphic quadratic differentials with higher order poles.

This paper considers meromorphic quadratic differentials with at least one pole of order at least two, and zeros of arbitrary finite order.  We answer a question of Ivan Smith concerning a fundamental property of the set of directions on these differentials that admit saddle connections, and provide a new proof of a lemma of \cite{BridgelandSmithStabConds, GMNWallHitchinWKB}.

There is a growing body of literature concerning meromorphic differentials including the recent works of \cite{BoissyConnCompsStrMeroDiffs, BoissyModSpMeroDiffs, GuptaMeroQuadHalfPlane, GuptaWolfQuadDiffsHalfPlaneStrcts}.  This paper falls in line with those works and answers a question inspired by stability conditions.  In addition to being of interest to those working with meromorphic differentials, we hope that the results of this paper are also of interest to those who study stability conditions.

The techniques used in this paper lie firmly in the realm of translation surfaces, which are given by Abelian differentials on Riemann surfaces.  We introduce the concept of a ``slit translation surface,'' which is an ordinary translation surface with a collection of marked line segments.  This provides the key object that makes the problem tractable.  The connection of these objects to meromorphic quadratic differentials follows from \cite{Strebel} and \cite{GuptaMeroQuadHalfPlane}.  We begin by recalling the celebrated result \cite[Thm. 2]{MasurClosedTraj}.

\begin{theorem}[\cite{MasurClosedTraj}]
\label{MasurThm}
Given a non-zero holomorphic quadratic differential on a Riemann surface, the set of directions that admit a cylinder is \emph{dense} in the circle.
\end{theorem}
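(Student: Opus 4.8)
The plan is to reduce to the case of Abelian differentials and then to produce a closed geodesic in every prescribed arc of directions by renormalizing with the Teichm\"uller geodesic flow. First I would pass to the orientation double cover, on which the pullback of the quadratic differential admits a global square root and hence becomes a translation surface $(X,\omega)$; a cylinder of closed trajectories downstairs lifts to a cylinder upstairs and conversely, so it suffices to prove that on any translation surface the set of directions carrying a cylinder is dense. I would also invoke the classical structure theorem for the straight-line flow in a fixed direction: the surface decomposes into finitely many minimal components and finitely many cylinders, so a ``cylinder direction'' is precisely one in which at least one such cylinder appears.

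Fix an open arc $I\subset S^1$; the goal is a cylinder direction inside $I$. Choose $\theta\in I$, rotate the surface so that $\theta$ becomes vertical, and consider the orbit of $g_t=\mathrm{diag}(e^{t},e^{-t})$, which expands horizontal and contracts vertical holonomy. The dichotomy I would exploit is recurrence versus divergence of this orbit in moduli space. If the orbit diverges and leaves every compact set, then by Mumford's compactness criterion the length of the shortest saddle connection tends to $0$; since $g_t$ can only shrink a saddle connection whose holonomy is becoming vertical, these short saddle connections are nearly vertical. I would then use the lemma that a saddle connection of minimal length in its direction bounds a cylinder: the parallel family of trajectories issuing from one side either closes up to a cylinder or first meets a singularity, and in the latter case it produces a strictly shorter saddle connection, contradicting minimality. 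The resulting cylinder has core direction within an angle of order $(\text{length})/(\text{width})$ of the vertical, so choosing $t$ large places this direction inside $I$.

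The remaining case is when the $g_t$-orbit is recurrent. By Masur's criterion, recurrence forces the vertical foliation to be uniquely ergodic, hence minimal, so the exact vertical direction carries no cylinder and must be perturbed. Here I would use that directions of saddle connections are dense (the holonomy vectors form a discrete set whose directions accumulate everywhere), pick such a direction $\phi\in I$ close to vertical, and argue that after a small rotation sending $\phi$ to the vertical the renormalized orbit now diverges, returning us to the construction of the previous paragraph.

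The step I expect to be the genuine obstacle is this last perturbation together with the quantitative angle control: showing that leaving a minimal direction through a nearby saddle-connection direction really does force the renormalized geodesic to diverge and split off a cylinder, and that the core of the cylinder so produced is guaranteed to land \emph{inside} the prescribed arc $I$ rather than merely near its endpoint. Making the error bound on the cylinder's direction uniform, and independent of how badly behaved the minimal direction is, is exactly where Masur's compactness and recurrence machinery carries the weight, and I would expect to spend most of the effort controlling the geometry of the nearly vertical cylinder that the shortest saddle connection bounds.
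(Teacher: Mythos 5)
A preliminary remark: the paper does not actually prove this theorem --- it is quoted from Masur's paper \cite{MasurClosedTraj}, and the only related argument the paper supplies is the elementary finite-area sector proof of the weaker Corollary \ref{SCDense} about saddle connection \emph{directions}. So your proposal has to stand on its own, and as written it has a genuine gap.

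The gap is the lemma you lean on in the divergent case (and, via the perturbation step, in the recurrent case as well): \emph{a saddle connection of minimal length in its direction bounds a cylinder}. This is false. Take two flat tori whose vertical linear flows are minimal (lattices with no vertical vector), cut a vertical slit of length $\varepsilon$ in each, and reglue the slits crosswise. The result is a surface in $\mathcal{H}(1,1)$ on which the vertical foliation decomposes into two minimal components separated by the two copies of the slit. Those copies are vertical saddle connections, they are the shortest saddle connections on the surface (for small $\varepsilon$), and they bound no cylinder --- indeed the vertical direction contains no cylinder at all. Your justification breaks at exactly the corresponding point: the parallel trajectories issuing from one side of a vertical saddle connection $\sigma$ are themselves vertical, so when one of them meets a singularity it produces another \emph{vertical} saddle connection with no reason to be shorter than $\sigma$; in the example those trajectories are dense in a minimal component and never close up. Shortness does not rescue the statement: applying $g_t$ to the example shrinks the slit copies like $e^{-t}$ while they continue to bound minimal components, so the divergent case of your argument produces no cylinder.

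What must replace that lemma is the genuinely nontrivial input behind Masur's theorem (due to Masur, Smillie, Vorobets): there is a constant $c$, depending only on the stratum, such that \emph{every} area-one surface in the stratum contains a cylinder of circumference at most $c$. Granting that, your renormalization scheme works and in fact simplifies, with no recurrent/divergent dichotomy needed: apply the lemma to $g_t X$ for large $t$ to get a cylinder whose holonomy $(x,y)$ on $X$ satisfies $|x| \leq c e^{-t}$; since $\sqrt{x^2+y^2}$ is at least the systole $s$ of $X$, we get $|y| \geq s/2$ for large $t$, so the cylinder's direction on $X$ is within angle $\arctan(2ce^{-t}/s)$ of the chosen vertical and eventually lies inside the prescribed arc $I$. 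This also disposes of the uniformity worry you raise at the end. But the bounded-circumference lemma is proved by a separate geometric argument (maximal embedded disks, or triangulation counts), not by the shortest-saddle-connection reasoning you propose; as written, your proof does not close.
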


The following is a trivial corollary of Theorem \ref{MasurThm}.  However, there is a far simpler proof, which we provide in the next section, that does not rely on Theorem \ref{MasurThm}.

\begin{corollary}
\label{SCDense}
Given a non-zero holomorphic quadratic differential on a Riemann surface, the set of directions that admit a saddle connection is \emph{dense} in the circle.
\end{corollary}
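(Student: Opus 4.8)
The plan is to produce, for every direction $\theta$ and every $\epsilon>0$, a saddle connection whose direction lies within $\epsilon$ of $\theta$. First I would pass to the orientation double cover, on which $q$ becomes the square of an Abelian differential $\omega$; this replaces the half-translation surface by an honest translation surface $(X,\omega)$, turns the flat metric holonomy into genuine translations, and preserves density of saddle-connection directions (saddle connections downstairs lift to saddle connections upstairs). On $(X,\omega)$ the key structural facts I will use are: all cone angles are of the form $(k+2)\pi\geq 2\pi$, so the surface is nonpositively curved in the sense of cone metrics; the holonomy vector $\int_{\gamma}\omega$ of a loop $\gamma$ is a free-homotopy invariant; there are only finitely many saddle connections shorter than any given length, so the systole $\delta_0>0$ is a genuine positive lower bound on the length of every saddle connection.

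Next I would manufacture a nearly closed geodesic in direction $\theta$ by recurrence. Fix a short transversal $\tau$ of width $w$, perpendicular to $\theta$, through a regular point. The straight-line flow in direction $\theta$ preserves the area form $|\omega|^2$, which is finite, so its first-return map to $\tau$ preserves the transverse length measure; Poincar\'e recurrence then gives an orbit returning to within $w$ of its starting point along $\tau$ at arbitrarily large return times $T$. Choosing such a return with $w/T<\tan\epsilon$, I close the orbit up by the short sub-arc of $\tau$ joining its endpoints, obtaining a loop $\gamma$ whose holonomy is a vector of length $T$ in direction $\theta$ plus a vector of length $\le w$ perpendicular to it. Thus $\mathrm{hol}(\gamma)\neq 0$ and its direction is within $\arctan(w/T)<\epsilon$ of $\theta$; in particular $[\gamma]$ is non-contractible, since the trivial class has zero holonomy.

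I then straighten $\gamma$. By nonpositive curvature a length-minimizing representative $\gamma^{*}$ of the free homotopy class $[\gamma]$ exists, and because geodesics in a flat cone metric are straight away from cone points and can only break at cone points, $\gamma^{*}$ is either a regular closed geodesic, lying in a maximal flat cylinder whose boundary is a union of saddle connections parallel to it, or a cyclic concatenation of saddle connections $s_{1},\dots,s_{k}$. In either case $\gamma^{*}$ produces actual saddle connections, and $\sum_i s_i=\mathrm{hol}(\gamma^{*})=\mathrm{hol}(\gamma)$, a vector of length $\ge T-w$ pointing within $\epsilon$ of $\theta$.

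The main obstacle, and the crux of the argument, is to guarantee that these saddle connections actually point near $\theta$, rather than merely summing to a vector that does; a priori a broken geodesic could run out and back in wildly different directions. This is controlled precisely by length-minimization. Since $\gamma^{*}$ is shortest in its class, $\sum_i|s_i|=\mathrm{length}(\gamma^{*})\le \mathrm{length}(\gamma)\le T+w$, while $\bigl|\sum_i s_i\bigr|\ge T-w$. Writing $\phi_i$ for the angle between $s_i$ and $u:=\sum_i s_i$, and using $|u|=\sum_i|s_i|\cos\phi_i$, I get
\[
\sum_i |s_i|\,(1-\cos\phi_i)\;=\;\sum_i|s_i|-\Bigl|\sum_i s_i\Bigr|\;\le\;2w .
\]
As each $|s_i|\ge\delta_0$, this forces $1-\cos\phi_i\le 2w/\delta_0$ for every $i$, so each $\phi_i$ is small once $w$ is small (this also rules out any backtracking segment, for which $\phi_i\approx\pi$). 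Combining the smallness of $\phi_i$ with the fact that $u$ points within $\epsilon$ of $\theta$, every constituent saddle connection points within $\epsilon$ of $\theta$ for $w$ small and $T$ large; in the cylinder case the boundary saddle connections are exactly parallel to $u$. Since $\theta$ and $\epsilon$ were arbitrary, the set of saddle-connection directions is dense.
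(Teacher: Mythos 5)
Your argument is correct, but it is a genuinely different (and much heavier) route than the paper's. The paper's proof is a three-line area argument: fix a zero $z$ of $q$, develop a sector of any positive angle from $z$, and observe that since the sector's area grows without bound with its radius while $(X,q)$ has finite area, the sector must eventually capture a zero, producing a saddle connection whose direction lies in that sector; as the apex direction and opening angle were arbitrary, density follows with no dynamics at all. You instead pass to the orientation double cover (this step parallels the paper's Lemma \ref{CBRankCov}, which proves $\Theta(X,q)=\Theta(\tilde X,\omega)$, so you may as well cite it rather than reprove the projection/lifting of saddle connections), use Poincar\'e recurrence of the straight-line flow to close up a long orbit segment into a non-contractible loop with holonomy nearly in direction $\theta$, straighten it to a geodesic representative, and then control the directions of the constituent saddle connections by the length-defect inequality $\sum_i|s_i|(1-\cos\phi_i)\le 2w$ together with the systole bound $|s_i|\ge\delta_0$. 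That last estimate is the real content of your proof and it is right; the supporting facts you invoke (existence of geodesic representatives in flat cone metrics with cone angle $\ge 2\pi$, a.e.-defined measure-preserving return maps, finiteness of short saddle connections) are standard but each needs more justification than the paper's single appeal to finite area. What your approach buys: it produces \emph{long} saddle connections nearly parallel to any prescribed direction, either as boundaries of cylinders or as links of an almost-straight closed chain, which is strictly more structural information than density; it is close in spirit to closing-lemma proofs of Theorem \ref{MasurThm} while still not invoking Masur's theorem as a black box, so it respects the paper's stated goal. What the paper's approach buys: brevity, no ergodic theory or NPC geometry, no double cover, and the extra feature that the saddle connection can be taken to emanate from any prescribed zero with direction in any prescribed sector.
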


The assumption holomorphic can be weakened to meromorphic with at most simple poles in the previous corollary.  However, there is a striking difference in behavior when a meromorphic quadratic differential is permitted to have a pole of order two or more.  The following lemma was proven in \cite{GMNWallHitchinWKB} and \cite[Lem. 4.11]{BridgelandSmithStabConds}.

\begin{lemma}[\cite{BridgelandSmithStabConds, GMNWallHitchinWKB}]
\label{GMNClosed}
Given a meromorphic quadratic differential on a Riemann surface such that the differential has at least one pole of order at least two and all simple zeros, or more precisely, a GMN-differential, the set of directions that admit a saddle connection is \emph{closed}.
\end{lemma}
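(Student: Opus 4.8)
The plan is to prove the equivalent statement that the set of directions admitting \emph{no} saddle connection is open. So I would fix a direction $\theta_0$ admitting no saddle connection and produce an interval around $\theta_0$ in which no direction admits one. The central geometric input is the local structure of the flat metric $|\phi|$ near a pole of order at least two (following Strebel and \cite{GuptaMeroQuadHalfPlane}): such a pole has a neighborhood that is a union of finitely many Euclidean half-planes (for a pole of order $m\ge 3$) or a half-infinite cylinder (for a double pole), glued to a \emph{compact core} $K$ that contains all the zeros and simple poles. It is convenient to pass to the orientation double cover, where $\phi$ becomes the square of an Abelian differential and the half-plane ends and the straight-line (geodesic) flow are described most transparently; this is also the translation-surface viewpoint underlying the rest of the paper.

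The key dynamical fact I would isolate first is an \emph{absorption principle}: once a straight trajectory in direction $\theta$ properly crosses the boundary of a half-plane end into its interior, its imaginary part in the natural coordinate of that half-plane is strictly monotone, so the trajectory runs off to the pole and never re-enters $K$; the only exceptional direction is the one parallel to the boundary, which cannot correspond to a trajectory that has actually entered the interior. Thus entering a half-plane end \emph{is} an escape, and ``properly entering a half-plane end'' is an open condition on the direction. Alongside this I would record the elementary equivalence that a direction admits a saddle connection if and only if some separatrix — a geodesic ray issuing from a zero or simple pole in that direction — eventually meets a singularity.

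Next I would establish the structural dichotomy in the \emph{fixed} direction $\theta_0$: every separatrix in direction $\theta_0$ escapes into a pole after traversing only a bounded length inside $K$. Indeed, a separatrix that never meets a singularity either escapes or remains in $K$ for infinite length; in the latter case its closure would contain a minimal component of the directional foliation, whose boundary consists of saddle connections in direction $\theta_0$, contradicting the hypothesis (the same reasoning excludes cylinders, whose boundaries are likewise saddle connections). Hence each of the finitely many separatrices in direction $\theta_0$ spends only finite length $L_i$ in $K$ before entering some half-plane end, and along this finite portion it keeps strictly positive clearance $\delta_i>0$ from the finite set of singularities. This is the step I expect to be the main obstacle, since it combines the structure theory of the directional foliation on the finite-area core with the escape behaviour at the infinite ends; making it rigorous is precisely where the slit-translation-surface machinery should earn its keep.

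Finally I would run the openness argument. Set $L=\max_i L_i$ and $\delta=\min_i \delta_i$. The straight-line flow depends continuously on the direction away from singularities, uniformly over trajectories of length at most $L$, so for $|\theta-\theta_0|$ sufficiently small (of order $\delta/L$) each separatrix in direction $\theta$ stays within $\delta$ of the corresponding separatrix in direction $\theta_0$ over its first length-$L$ portion; it therefore avoids every singularity, enters the same half-plane end, and escapes by the absorption principle. Consequently no separatrix in such a direction meets a singularity, so $\theta$ admits no saddle connection, exhibiting an open neighbourhood of $\theta_0$ in the complement. This proves the complement open and hence the set of saddle-connection directions closed. The contrast with Corollary \ref{SCDense} is exactly the absorption principle: absent a higher-order pole there are no half-plane ends to absorb trajectories, recurrence is unavoidable, and the saddle-connection directions are merely dense rather than closed.
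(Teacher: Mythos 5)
Your proposal is correct in its architecture, but it is a genuinely different argument from the one in this paper --- indeed it is essentially the argument of \cite{BridgelandSmithStabConds, GMNWallHitchinWKB} themselves, which the introduction explicitly contrasts with the paper's own: they prove the complement is open, whereas here closedness is proved by showing the limit of every convergent sequence of saddle-connection directions again admits a saddle connection. Concretely, the paper passes to the canonical double cover (Lemma \ref{CBRankCov}), performs the partial pole surgery to get a slit translation surface, and then applies Lemma \ref{InfSCImpFinCyl}: an infinite sequence of saddle connections accumulates onto a union of parallel invariant components $\Omega$, and a case analysis on $\Omega$ (several components, a cylinder, a proper minimal component, or the entire surface) produces a saddle connection in the limiting direction. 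Your structural dichotomy --- that in a saddle-connection-free direction $\theta_0$ no separatrix can remain in the compact core forever, since its closure would be a minimal component (or cylinder) whose boundary consists of saddle connections in direction $\theta_0$ --- is exactly the same structure theory that powers Lemma \ref{InfSCImpFinCyl}, invoked contrapositively; you are right to flag it as the real work, and the slit-surface formalism together with Convention \ref{MeromorConv} is what rules out the degenerate case in which the ``minimal component'' fills the whole core and its frontier consists only of artificial truncation curves rather than genuine saddle connections (this is the case the paper treats last, where all poles are double poles and the removed half-infinite cylinder supplies the saddle connections). Two smaller points to tighten: for a pole of order $m \geq 4$ the end is a cone of angle $(m-2)\pi \geq 2\pi$, not a single half-plane, so the absorption principle should be phrased either via Strebel's half-plane theorem \cite{Strebel} (entering any one of the finitely many embedded half-planes covering the end traps the trajectory in that half-plane) or via convexity of the distance to the core along geodesics; and your clearance $\delta_i$ must exclude a neighborhood of the separatrix's own base singularity, near which the perturbed separatrix is close to, but moving away from, that singularity. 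As for what each route buys: yours yields a quantitative open interval, of size on the order of $\delta/L$, around each saddle-connection-free direction and is self-contained if closedness is all one wants; the paper's sequential argument produces the accumulation object $\Omega$ itself, which is then reused (Lemmas \ref{GenOmLimSet} and \ref{SubseqContOmLimSet}, Corollary \ref{CplxInc}) in the dimension induction that gives the Cantor-Bendixson bound of Theorem \ref{CBRankBd}, so closedness there arrives as a byproduct of machinery built for the finer result.
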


As a consequence of the work in this paper, we give a new proof of their result in complete generality.  We observe that our proof differs from theirs in that they prove that the set is closed by showing that the complement is open, and we prove that it is closed by showing that the limits of all convergent sequences lie in the set.

\begin{theorem}
\label{DoubPoleClosed}
Given a meromorphic quadratic differential with at least one pole of order at least two on a Riemann surface, the set of directions that admit a saddle connection is closed.
\end{theorem}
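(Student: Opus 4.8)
The plan is to reduce to the setting of translation surfaces and then analyze convergent sequences of saddle connections directly, extracting a limiting saddle connection in the limiting direction. First I would pass to the orientation double cover $\pi\colon \widehat X \to X$, on which the quadratic differential $q$ becomes the square of an Abelian differential $\omega$, so that $(\widehat X,\omega)$ is a translation surface with singularities and directions live on the circle $S^1$. Saddle connections of $q$ in a direction $\theta$ correspond to saddle connections of $\omega$ in directions $\theta$ or $\theta+\pi$, so the set of saddle-connection directions on $X$ is closed if and only if the corresponding set on $(\widehat X,\omega)$ is. The crucial feature of the hypothesis---that $q$ has a pole of order at least two---is that $(\widehat X,\omega)$ has \emph{infinite} area: by the local normal forms of \cite{Strebel} and \cite{GuptaMeroQuadHalfPlane}, a neighborhood of a pole of order $n\ge 3$ decomposes into $n-2$ Euclidean half-planes glued along horizontal separatrices, while a pole of order $2$ contributes a half-infinite cylinder. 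These infinite-area ends are exactly what distinguishes the present situation from Corollary \ref{SCDense}.

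Second, I would encode the finite-area ``core'' together with these ends as a slit translation surface: cut each half-plane end along a ray running to the pole so that the infinite ends are replaced by finitely many marked segments (slits) on an ordinary finite-area translation surface, whose endpoints record the boundary singularities adjacent to each pole. The point of this reduction is that a saddle connection of $\omega$ can enter a half-plane only as a single straight chord between boundary singularities, so on the slit surface every saddle connection becomes a concatenation of straight segments in the compact core that begin and end at genuine singularities and interact with the slits in a controlled combinatorial way. This is what makes the infinite-area problem amenable to compactness.

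Third, given a sequence $\theta_k \to \theta$ with each $\theta_k$ admitting a saddle connection $\gamma_k$ whose holonomy points in direction $\theta_k$, I would argue in two cases. If the Euclidean lengths $|\gamma_k|$ are bounded, then because a fixed translation surface has positive systole the lengths are bounded below as well, the $\gamma_k$ remain in a fixed compact region of the core, and an Arzela--Ascoli argument together with discreteness of holonomy extracts a subsequence converging to a nondegenerate straight segment joining two singularities in direction exactly $\theta$, i.e.\ a saddle connection in direction $\theta$. The entire difficulty is therefore the case $|\gamma_k|\to\infty$: a priori a long saddle connection could wind through the surface and dip arbitrarily far into the half-plane ends, so no naive compactness applies.

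The main obstacle, and where the slit structure earns its keep, is to control this unbounded case. The mechanism I would use is that inside a half-plane a geodesic transverse to the boundary must return after a length controlled by its angle, and the admissible chords between the finitely many boundary singularities form a discrete set; hence for each fixed length bound the combinatorial type of how $\gamma_k$ crosses the core and the slits takes only finitely many values. If $|\gamma_k|\to\infty$ the saddle connections must eventually repeat a combinatorial pattern, which should force either a saddle connection or a closed trajectory already in the exact limiting direction $\theta$. Making this pigeonhole-and-limit step precise---tracking how a trajectory in direction $\theta_k$ transitions between the core and the half-plane ends, and showing the limit is a genuine finite-length saddle connection rather than a separatrix escaping into a pole---is the technical heart of the argument, and is precisely what the slit-translation-surface formalism is designed to handle.
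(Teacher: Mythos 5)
Your reductions (canonical double cover, then truncating the infinite ends to get a finite-area slit translation surface) are exactly the paper's first two steps, and your bounded-length case is fine (it is in fact immediate: a finite-area translation surface has only finitely many saddle connections of bounded length, so the directions $\theta_k$ would eventually be constant). But the unbounded case is the entire content of Theorem \ref{DoubPoleClosed}, and there your proposal has a genuine gap: you say the repetition of combinatorial patterns ``should force either a saddle connection or a closed trajectory already in the exact limiting direction $\theta$'' and defer making this precise. That deferred step is not a technicality --- it is false as stated as a mechanism. The foliation in the limiting direction $\theta$ can be \emph{minimal} on the finite-area core, in which case there is no closed trajectory and no saddle connection in direction $\theta$ anywhere in the interior of the core, so no pigeonhole argument about crossing patterns inside the core can ever close up and produce one. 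The paper's replacement for your missing step is Lemma \ref{InfSCImpFinCyl}: the accumulation set $\Omega(\{s_n\})$ of a sequence of long saddle connections is a finite union of parallel \emph{invariant components} in the limit direction $\theta'$, and the limiting saddle connection is then located on the \emph{boundary} of these components (boundary of a cylinder, boundary of a minimal component, or the common boundary of two components) --- not produced by a recurrence of the $s_n$ themselves.

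Moreover, the worst subcase --- $\Omega$ a minimal component filling the whole surface --- is resolved in the paper only by exploiting the structure of the surgery, which your sketch cannot see. If $\Omega$ is everything, its boundary is the set of slits, and since the boundary of a union of invariant components is parallel to $\theta'$, all slits lie in direction $\theta'$; this rules out poles of order $\ge 3$ (their truncation produces slits in two distinct directions) and forces every pole to be a double pole whose slit, hence whose removed half-infinite cylinder, lies in direction $\theta'$. The saddle connection in direction $\theta'$ is then found on the boundary of that half-infinite cylinder on the \emph{original} surface $(X,q)$, i.e.\ outside the core entirely. Your proposal, which searches for the limit object inside the core, has no access to this saddle connection. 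A secondary issue: your surgery (cutting along rays running into the pole) produces slits that saddle connections may cross, which is why you are forced into combinatorial bookkeeping; the paper instead uses Strebel's theorem that a sufficiently small neighborhood of a pole of order $\ge 3$ meets no saddle connection at all, so that after truncation $\Theta$ is preserved exactly (Convention \ref{MeromorConv}) and saddle connections never interact with the slits. Adopting that truncation would eliminate your combinatorial cases, but you would still need Lemma \ref{InfSCImpFinCyl} and the boundary argument above to handle $|\gamma_k|\to\infty$.
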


The proof is given in Section \ref{PfGenGMNLemSect} after all of the necessary prerequisites have been established.  The main result of this paper is

\begin{theorem}
\label{FinCBRankMainThm}
Given a meromorphic quadratic differential with at least one pole of order at least two on a Riemann surface, the set of directions that admit a saddle connection has finite Cantor-Bendixson rank.
\end{theorem}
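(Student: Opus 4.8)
The plan is to work entirely inside the slit translation surface model described above. After passing to the orientation double cover we may assume we have an Abelian differential, and the meromorphic quadratic differential, together with its higher-order poles, is encoded by a compact finite-area translation surface $X$ carrying a finite collection of marked segments (slits) $s_1,\dots,s_k$, where crossing a slit corresponds to escaping into an infinite-area end (a half-plane when the pole has order $\ge 3$, a semi-infinite cylinder when it has order exactly two) from which no saddle connection can return. Thus a saddle connection is a geodesic segment between cone points -- zeros, simple poles, and slit endpoints -- that does \emph{not} cross any slit, and $\Theta\subseteq S^1$, the set of directions admitting such a segment, is closed by Theorem~\ref{DoubPoleClosed}. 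Consequently the Cantor-Bendixson derivatives $\Theta^{(0)}=\Theta\supseteq\Theta'\supseteq\Theta''\supseteq\cdots$ are all closed, and the goal is to show this chain stabilizes after finitely many steps, with an explicit bound in terms of the combinatorial complexity of $X$ (the genus together with the number and orders of the poles, equivalently the number $k$ of slits).

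First I would isolate and quantify the dichotomy behind Theorem~\ref{DoubPoleClosed}. Given $\theta_n\to\theta$ realized by saddle connections $\gamma_n$ in directions $\theta_n$, the lengths $|\gamma_n|$ either stay bounded or diverge. In the bounded case, finiteness of the number of saddle connections of bounded length on a translation surface produces a genuine saddle connection in direction $\theta$, and such $\theta$ are essentially isolated points of $\Theta$. The accumulation that matters comes from the case $|\gamma_n|\to\infty$: to be long while remaining in the compact surface $X$ and avoiding every slit, a saddle connection is forced to spend almost all of its length hugging a slit or winding in a flat cylinder one of whose sides is a slit. I would make this quantitative, showing that a long non-crossing geodesic in a direction near $\theta$ forces $\theta$ to be the direction determined by some sub-configuration of slits that the geodesic runs parallel to.

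The heart of the argument is then a reduction step organized as an induction on complexity. I would attach to each saddle connection a complexity $c(\gamma)$ -- say, the number of essentially distinct long excursions it makes parallel to the slits -- and prove a Key Lemma: every point of $\Theta'$ is either an isolated-type direction already accounted for, or is realized as a saddle connection direction on a \emph{strictly simpler} slit translation surface, obtained by absorbing one slit $s$ into the limiting path (replacing a family of ever-longer geodesics hugging $s$ by a limiting geodesic that incorporates $s$ and thereby drops the effective slit count by one). Iterating, $\Theta^{(j)}$ is contained in the saddle-connection-direction set of a slit surface with at most $k-j$ slits, together with a finite set of special directions coming from the slits themselves. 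The base of the induction is that saddle connections making no long excursion have length bounded in terms of $X$ and so are finite in number, contributing only isolated points removed by a single derivative. Since each reduction strictly lowers the complexity and the initial complexity is bounded by $k$, the chain stabilizes after at most $k+1$ steps, yielding finite Cantor-Bendixson rank with an explicit bound.

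The step I expect to be the main obstacle is precisely this reduction lemma: proving that a sequence of non-crossing saddle connections whose lengths tend to infinity, all nearly parallel to a fixed slit, converges -- after truncating the long parallel excursions -- to a saddle connection of \emph{strictly smaller} complexity, and that this passage to limits is compatible with the Cantor-Bendixson derivative, so that complexity is genuinely lowered rather than merely relocated. Controlling several slits simultaneously, so that $c(\gamma)$ is well defined and strictly decreasing under $\Theta\mapsto\Theta'$, and then sharpening the count into a \emph{tight} bound matching the extremal family of examples, is where the real work lies; the half-plane ends (poles of order $\ge 3$) demand more care than the cylindrical ends (poles of order exactly two), since their richer boundary combinatorics lets a single end contribute several independent special directions.
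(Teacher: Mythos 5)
Your setup (pass to the double cover, work in the slit translation surface model, use Theorem~\ref{DoubPoleClosed} for closedness) matches the paper, but your induction scheme has a genuine gap: the complexity you propose to decrease --- the number of slits $k$, which depends only on the poles --- cannot control the Cantor-Bendixson rank, so your Key Lemma is false as stated. The paper's own family $\hat S_n$ of Section~\ref{SnExSect} is a counterexample to your claimed bound of about $k+1$: each $\hat S_n$ carries exactly \emph{one} slit, yet $\Theta(\hat S_n)$ has Cantor-Bendixson rank $n+2$, unbounded as the genus grows. (The same phenomenon occurs for honest meromorphic differentials: a single double pole on a surface of large genus produces, after the surgery, a single closed-curve slit, while the rank can still grow with the genus.) The mechanism your reduction misses is that long saddle connections avoiding the slits need not ``hug a slit or wind in a cylinder one of whose sides is a slit'': by Lemma~\ref{InfSCImpFinCyl} they accumulate onto a union of parallel \emph{invariant components} (cylinders or minimal components) whose boundaries consist of saddle connections, and those boundaries generally contain no slit at all. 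In $\hat S_n$ this produces a nested hierarchy: saddle connection directions accumulate to core curves of cylinders, the directions of those cylinders accumulate to directions of larger invariant subsurfaces, and so on, $n+2$ levels deep, all generated by a single slit. Each Cantor-Bendixson derivative consumes one level of this \emph{topological} hierarchy, not one slit.

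The quantity that actually governs the induction is homological: the paper assigns to a union $\Omega$ of parallel invariant components the dimension $d(\Omega)=\dim_{\mathbb{C}}H_1(\Omega,\Sigma',\mathbb{C})$ and proves (Lemma~\ref{LemInvCompImpSmallDim}, Corollary~\ref{CplxInc}) that when a sequence of invariant components accumulates onto $\Omega$, a subsequence consists of proper subsets of $\Omega$ in a different direction, hence of strictly smaller dimension. Consequently each derivative raises by one the dimension of the invariant components representing the surviving directions, and the rank is bounded by $\dim_{\mathbb{C}}\mathcal{H}(\kappa)$ of the stratum associated to the slit surface (Theorem~\ref{CBRankBd}) --- a bound that necessarily involves the genus, not just the pole data. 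If you want to salvage your outline, replace ``number of slits'' by this homological dimension of the limiting subsurface; the absorption step you describe then becomes the assertion that the limit set strictly contains (a subsequence of) the accumulating components, which is exactly the content of Lemma~\ref{SubseqContOmLimSet} and is genuinely nontrivial --- for general dynamical systems the analogous containment fails.
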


In fact, once the proper terminology is introduced, we give a tight explicit upper bound on the Cantor-Bendixson rank in Theorem \ref{CBRankBd} and produce an infinite family of surfaces realizing the maximum possible Cantor-Bendixson rank in Section \ref{SnExSect}.  The family of examples given in Section \ref{SnExSect} demonstrate that arbitrarily large Cantor-Bendixson rank can be achieved if the genus is sufficiently large.

\

\noindent \textbf{Acknowledgments:} The author would like to thank Ivan Smith for posing the problem that inspired this paper, and for his continued interest.  The author is also extremely grateful to Howard Masur for listening to numerous presentations of earlier forms of this work, and for his patience and feedback throughout the discussions.  He is also grateful to him for his careful and thoughtful feedback on an earlier version of this paper.  He also thanks Giovanni Forni for input that lead to a simplification in the proof of Lemma \ref{InfSCImpFinCyl}.  Finally, the author thanks Jon Chaika and Anton Zorich for helpful discussions.

\section{Definitions and Preliminaries}

\noindent \textbf{Abelian and Quadratic Differentials:} Let $X$ be a finite genus Riemann surface of genus $g \geq 2$.  An Abelian differential is $1$-form that can be expressed in local coordinates as $f(z)\,dz$, where $f(z)$ is meromorphic.  A quadratic differential is a tensor that can be expressed in local coordinates as $g(z)\,dz^2$, where $g(z)$ is meromorphic.  Throughout this paper, $\omega$ will always refer to an Abelian differential, and $q$ will refer to a quadratic differential.  An Abelian (resp. quadratic) differential canonically determines a horizontal and vertical foliation on $X$ by $\{\text{Im}(\omega) = 0\}$ (resp. $\{\text{Re}(q) > 0\}$) and $\{\text{Re}(\omega) = 0\}$ (resp. $\{\text{Re}(q) < 0\}$), respectively.  Pairs $(X, \omega)$ are commonly called \emph{translation surfaces} and pairs $(X, q)$ are commonly called \emph{half-translation surfaces}.

\begin{definition}
Given $(X, q)$, let $\Sigma \subset X$ be a finite collection of marked points on $X$ such that $\Sigma$ contains all points at which $q$ has a zero or simple pole, but not a pole of order two or more.  A \emph{saddle connection} on $(X,q)$ is a finite length segment contained in a leaf of a foliation such that its end points are elements of $\Sigma$.
\end{definition}

\begin{definition}
Given a regular closed leaf of a foliation on $(X, q)$, the maximal set of leaves homotopic to it is called a \emph{cylinder}.  If the distance between the boundaries of a cylinder is finite, we call the cylinder a \emph{finite cylinder}.  If a leaf of a foliation on $(X, q)$ is not closed, its closure is called a \emph{minimal component}.  Any subset of a surface that is either a cylinder or a minimal component is called an \emph{invariant component}.
\end{definition}

Observe that invariant components come with a canonical direction that is well-defined modulo $\pi$, which we call the \emph{direction} of the invariant component.  If two invariant components have the same direction modulo $\pi$, then we say they are \emph{parallel}.

We provide an elementary proof of Corollary \ref{SCDense} here.  

\begin{proof}[Proof of Corollary \ref{SCDense}]
Let $(X, q)$ be the Riemann surface carrying the non-zero holomorphic quadratic differential.  Fix a zero $z$ of $q$.  Consider a sector of angle $\theta > 0$ defined by two rays emanating from $z$.  As the radius of the sector increases, the area defined by the sector increases.  Since $(X,q)$ has finite area, the sector must contain a zero of $q$ (not necessarily distinct from $z$) for a sufficiently large radius of the sector.  Furthermore, this is true regardless of the direction of the sector or the angle $\theta$.  Hence, there is a dense set of directions in $(X,q)$ that admit a saddle connection.
\end{proof}

\noindent \textbf{Strata of Differentials:} For $\kappa$ a partition of $2g-2$, the stratum $\cH(\kappa)$ is the moduli space of all genus $g$ translation surfaces with orders of zeros of the Abelian differential specified by $\kappa$.  For $\kappa$ a partition of $4g-4$, where we permit any finite number of simple poles of order $-1$, the stratum $\cQ(\kappa)$ is the moduli space of all genus $g$ half-translation surfaces with order of zeros and the number of simple poles of the quadratic differential specified by $\kappa$.  For meromorphic Abelian or quadratic differentials with higher order poles there is a discussion of strata of such meromorphic differentials discussed in \cite{BoissyConnCompsStrMeroDiffs, BoissyModSpMeroDiffs}.  However, for our purposes, it suffices to regard such a stratum as a set specifying the orders of all of the zeros and poles.

\

\noindent \textbf{Cantor-Bendixson Rank:} We follow the definition of Cantor-Bendixson rank given in \cite[Ch. 6C]{KechrisClassDescSetTheory}.  For the purpose of this paper it suffices to consider all sets as subsets of the unit interval in the real line.

\begin{definition}
Let $S \subset \bR$.  Define the \emph{derived set} $S^*$ of $S$ to be the subset of $S$ such that all isolated points of $S$ are removed.  Denote by $S^{*n}$ the $n$'th derived set of $S$, and $S^{*0} = S$.
\end{definition}

\begin{definition}
Let $S \subset \bR$.  The \emph{Cantor-Bendixson rank} of $S$ is defined to be the smallest non-negative integer $n$ such that $S^{*n+1} = S^{*n}$.
\end{definition}

For example, the empty set or any perfect set has Cantor-Bendixson rank zero.  Furthermore, all non-empty finite sets have Cantor-Bendixson rank one.  The following lemma is obvious.

\begin{lemma}
\label{CantorBendClosedSetBd}
Let $G \subset \bR$ be a closed set such that the only perfect subset of $G$ is the empty set.  If $F \subset G$ is closed and $G$ has Cantor-Bendixson rank $k$, then the Cantor-Bendixson rank of $F$ is bounded above by $k$.
\end{lemma}

\noindent \textbf{Saddle Connection Directions:}

\begin{definition}
Let $(X, q)$ be a Riemann surface carrying a quadratic differential.  Let $\cF_{\theta}$ denote the vertical foliation of $(X, e^{i\theta}q)$.  Define the set
$$\Theta(X,q) = \{ \theta \in [0,\pi) | \cF_{\theta} \text{ admits a saddle connection} \}.$$
The same definition holds if $q$ is replaced by an Abelian differential $\omega$ and the angle is taken modulo $\pi$.
\end{definition}

The following lemma is obvious, but we state it to illustrate that there are no complications in the presence of poles of arbitrary order.

\begin{lemma}
\label{SCAlwExistLem}
Let $(X,q)$ be a Riemann surface of genus $g \geq 1$ and a non-empty set $\Sigma$ as defined above.  Then for some $\theta$, $(X,e^{i\theta}q)$ admits a saddle connection.  Furthermore, the Cantor-Bendixson rank of $\Theta(X,q)$ is always positive because $\Theta(X, q) \not= \emptyset$ and $\Theta(X, q)$ is not perfect.
\end{lemma}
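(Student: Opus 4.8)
The plan is to prove the two asserted facts separately --- that $\Theta(X,q)$ is non-empty and that it is not perfect --- and then read off positive Cantor-Bendixson rank. I would dispose of non-perfectness first, since it is the softer point. Each saddle connection is determined by its relative homotopy class rel $\Sigma$, and for every $L$ there are only finitely many saddle connections of length at most $L$, since each stays within a bounded neighborhood of the compact core introduced below; hence the set of saddle connections, and with it $\Theta(X,q)$, is countable. A non-empty perfect subset of $\bR$ is uncountable, so once non-emptiness is established, $\Theta(X,q)$ is a non-empty countable set and cannot be perfect. Because the standing hypothesis forces $q$ to have a pole of order at least two, Theorem \ref{DoubPoleClosed} shows $\Theta(X,q)$ is closed, and a non-empty closed countable subset of $\bR$ necessarily has an isolated point; thus $\Theta(X,q)^{*} \subsetneq \Theta(X,q)$ and the Cantor-Bendixson rank is at least $1$.

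For non-emptiness I would imitate the sector argument in the proof of Corollary \ref{SCDense}, but that argument relies on finiteness of area, and a pole of order at least two makes $|q|$ have infinite area, so neither it nor Theorem \ref{MasurThm} applies directly. My remedy is to delete the infinite-area ends surrounding the higher-order poles, leaving a compact core $X_0$ of finite area with boundary; every point of $\Sigma$ lies in $X_0$, because the deleted ends accumulate only at the higher-order poles, which are excluded from $\Sigma$ by definition. Since $\Sigma$ is non-empty and at least one higher-order pole furnishes a puncture, there is a non-trivial relative homotopy class of arcs with endpoints in $\Sigma$; I would then minimize $|q|$-length over this class. A length-minimizer is a local geodesic of the flat metric, hence a concatenation of saddle connections, and the direction of any constituent saddle connection lies in $\Theta(X,q)$.

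The crux, and the step I expect to be hardest, is showing that this minimization is actually attained and does not run off into one of the deleted ends. I would argue that travelling distance $R$ into an end forces length to grow without bound, while the fixed class admits representatives of uniformly bounded length inside $X_0$, so the infimum is realized in the interior of the core. One genuinely different case must be flagged: the minimizing object could be a closed geodesic --- say the core curve of a half-infinite cylinder spiralling into a double pole --- rather than an arc between points of $\Sigma$; but then the finite boundary of that cylinder is itself a union of saddle connections, so a saddle connection, and hence a point of $\Theta(X,q)$, is produced in every case. Once non-emptiness is secured, the countability and closedness observations of the first paragraph complete the argument.
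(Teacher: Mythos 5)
Your proposal is correct in the setting the paper actually cares about, but it takes a genuinely different route on both halves of the lemma, and the comparison is instructive. For non-emptiness, the paper's argument is more elementary than your length-minimization: fix $p \in \Sigma$ and grow the flat discs $D_r(p)$; for some finite $r$ the disc must either capture a second point of $\Sigma$ or self-intersect, since otherwise $(X,q)$ would be an increasing union of embedded discs and hence simply connected, contradicting $g \geq 1$; either event produces a straight segment with endpoints in $\Sigma$. This completely sidesteps what you correctly single out as the crux of your approach, namely attainment of the infimum and the possibility of minimizing sequences escaping into the ends. Your properness argument (length grows linearly with penetration into an end, so bounded-length representatives of a fixed class stay in a compact region) is standard and does close that gap, but it is machinery the paper never needs. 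For positive rank, the paper uses countability alone: $\Theta(X,q)$ is countable, hence contains no non-empty perfect subset, hence successive derived sets can only stabilize on the empty set. Your extra step --- invoking Theorem \ref{DoubPoleClosed} to get closedness and then extracting an isolated point --- is actually more careful than the paper's inference: countability by itself does not force an isolated point (think of $\mathbb{Q}$), so closedness, or some substitute, is genuinely needed to conclude $\Theta^{*} \subsetneq \Theta$. There is also no circularity in your forward reference, since the proof of Theorem \ref{DoubPoleClosed} nowhere uses this lemma.

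The one caveat is a hypothesis mismatch: the lemma as stated assumes only $g \geq 1$ and $\Sigma \neq \emptyset$, not that $q$ has a pole of order at least two, so your appeal to Theorem \ref{DoubPoleClosed} does not cover, for example, holomorphic $q$. This cannot be repaired by another route, because in that generality the ``Furthermore'' clause is itself problematic: on a flat torus with a single marked point, $\Theta$ is the set of rational directions, which is dense-in-itself, so its Cantor-Bendixson rank under the paper's definition is zero. Your proof is therefore best read as a proof of the lemma under the paper's standing assumption of a pole of order at least two --- which is also the only case in which the paper's own one-line countability argument can be made rigorous.
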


\begin{proof}
First we prove that $\Theta(X, q) \not= \emptyset$.  Let $p \in \Sigma$.  Consider discs $D_r(p)$ of radius $r > 0$ about $p$.  We consider $r$ tending to infinity.  As soon as we find a value of $r$ for which the disc $D_r(p) \subset (X, q)$ contains another element of $\Sigma$, or self-intersects, we can consider the straight trajectory through from $p$ to the other element of $\Sigma$, or to the self-intersecting boundary to get a saddle connection.  However, one of these possibilities must occur, otherwise, $(X, q)$ would be a simply connected disc with a marked point.

Finally, it is well known that for all non-zero quadratic differentials with finitely many finite order zeros and poles, $\Theta(X, q)$ is at most countable.  Hence, it can never contain a non-empty perfect subset because all non-empty perfect sets are uncountable.  Thus, successive derivations of this set can only stabilize on the empty set.
\end{proof}

A priori, it is not clear that the Cantor-Bendixson rank of $\Theta$ is always finite.  This will follow as a corollary of Theorem \ref{CBRankBd}.

\

\noindent \textbf{Canonical Double Covering:} We recall the canonical double covering of a quadratic differential.  In particular, we bring to the reader's attention that this construction, which is typically discussed in the context of meromorphic quadratic differentials with at most simple poles works for meromorphic quadratic differentials with poles of arbitrary finite order.  Let $(X, q)$ be a quadratic differential in the stratum $\cQ(k_1, \ldots, k_n, k_{n+1}, \ldots, k_{n+m})$, where the numbers are sorted so that $k_1, \ldots, k_n$ are even and $k_{n+1}, \ldots, k_{n+m}$ are odd.  We permit $n \geq 0$ and $m \geq 0$, but $n+m > 0$.  Here we only assume that $k_i \in \bZ$ and $|k_i| < \infty$ for all $i$.  Let $p_i \in X$ be the singularity of $q$ of order $k_i$, for all $i$.  Consider the double cover of $(X, q)$ branched only over the points $\{p_{n+1}, \ldots, p_{n+m}\}$, and denote the resulting surface by $(\tilde X, \omega)$.  We call $(\tilde X, \omega)$ the \emph{canonical double cover} of $(X, q)$.  Observe that $\omega$ is a global square-root so that it is in fact a meromorphic $1$-form, and $(\tilde X, \omega)$ is in the stratum $\cH(k_1/2, \ldots, k_n/2, k_1/2, \ldots, k_n/2, k_{n+1}+1, \ldots, k_{n+m}+1)$.  Again, we remark that this familiar formula for the stratum containing $(\tilde X, \omega)$ works equally well when $k_i < -1$.

\begin{lemma}
\label{CBRankCov}
Let $(X, q)$ be a Riemann surface carrying a meromorphic quadratic differential, and let $(\tilde X, \omega)$ be the resulting canonical double covering.  Then
$$\Theta(X,q) =  \Theta(\tilde X, \omega).$$
In particular, the Cantor-Bendixson rank of the two sets are equal.
\end{lemma}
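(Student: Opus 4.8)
The plan is to prove the two direction sets coincide by building a bijective, direction-preserving correspondence between saddle connections upstairs and downstairs, and then to deduce the Cantor-Bendixson statement from the fact that the derived-set operation commutes with homeomorphisms of the parameter circle.

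First I would fix the covering geometry. Write $\pi : (\tilde X, \omega) \to (X, q)$ for the canonical double cover, so that $\pi^* q = \omega^2$ and the deck involution $\sigma$ satisfies $\sigma^* \omega = -\omega$. Away from the singularities $\pi$ is a local isometry for the flat metrics $|q|^{1/2}$ and $|\omega|$, and in a local natural coordinate $z = \int \sqrt{q}$ on $X$ the form $\omega$ pulls back to $\pm\,dz$. Consequently the vertical foliation $\cF_{\theta}$ of $(X, e^{i\theta} q)$ pulls back leaf-for-leaf to the vertical foliation of $(\tilde X, e^{i\theta/2}\omega)$, whose square is $\pi^*(e^{i\theta}q)$, so that geometric directions correspond under $\pi$ modulo $\pi$. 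Here I would record which points are branch points: they are precisely the odd-order singularities of $q$; the odd-order zeros and simple poles lie in $\Sigma$, while every remaining odd-order singularity is a pole of order at least three and therefore sits at infinite $|q|^{1/2}$-distance.

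Next I would push the correspondence through in both directions. A saddle connection on $(X,q)$ has finite length, so it cannot reach any pole of order at least two; moreover its interior is disjoint from $\Sigma$, and hence from every branch point of $\pi$ it could possibly meet. Its preimage is therefore a pair of disjoint geodesic segments, each running between lifts of its endpoints and each carrying the same direction; after marking the lift $\pi^{-1}(\Sigma)$ on $\tilde X$, these are saddle connections of $(\tilde X, \omega)$. The one point requiring care is that a simple pole of $q$ is a branch point whose lift is a \emph{regular} point of $\omega$, of cone angle $2\pi$, so it must be retained as a marked endpoint upstairs rather than discarded. Conversely, the interior of any saddle connection of $(\tilde X, \omega)$ avoids the singularities and marked points, in particular the ramification points, so $\pi$ maps it injectively onto a saddle connection of $(X,q)$ in the same direction. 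This yields, direction by direction, the equality $\Theta(X,q) = \Theta(\tilde X, \omega)$.

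Finally, for the Cantor-Bendixson statement the correspondence above matches isolated directions with isolated directions and is implemented by a homeomorphism $h$ of the parameter circle, so it commutes with the derived-set operation and $h(S^{*n}) = (h(S))^{*n}$ for every $n$; the two sets therefore have equal Cantor-Bendixson rank. (If one parametrizes each $\Theta$ by the rotation angle $\theta$ rather than the underlying geometric direction, then $h$ is the doubling $\theta \mapsto \theta/2$ forced by $e^{i\theta}q = (e^{i\theta/2}\omega)^2$; either way the rank is unchanged.) I expect the main obstacle to be the bookkeeping at the branch points — verifying that no saddle connection downstairs has a branch point in its interior, and correctly treating simple poles, which become regular marked points upstairs — rather than anything in the passage to Cantor-Bendixson rank.
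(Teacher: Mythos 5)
Your proposal is correct and takes essentially the same approach as the paper: both inclusions of $\Theta(X,q) = \Theta(\tilde X, \omega)$ are obtained by lifting saddle connections through the branched double cover and projecting them back down, using that the covering map is a direction-preserving local isometry away from the branch points, with the rank statement then immediate. Your write-up is in fact more careful than the paper's terse proof --- e.g.\ marking the regular preimages of simple poles upstairs, and tracking the $\theta \mapsto \theta/2$ reparametrization coming from $\pi^* (e^{i\theta}q) = (e^{i\theta/2}\omega)^2$, which the paper glosses over by speaking only of geometric directions --- but these refinements do not change the underlying argument.
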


\begin{proof}
We have $\Theta(X,q) \subseteq \Theta(\tilde X, \omega)$ because $\Theta(X,q) \subset [0,\pi)$, the direction of a saddle connections on $(X, q)$ is preserved up to rotation by $\pi$ on $(\tilde X, \omega)$, and saddle connections on $(X, q)$ lift to one or two parallel saddle connections above.

On the other hand, we claim that we also have $\Theta(\tilde X, \omega) \subseteq \Theta(X,q)$.  Let $\sigma \subset (\hat X, \omega)$ be a saddle connection between the (not necessarily distinct zeros) $z_1$ and $z_2$.  Let $\pi: (\tilde X, \omega) \rightarrow (X, q)$ denote the canonical double covering map.  Then $\pi(\sigma)$ is a parallel union of saddle connections from $\pi(z_1)$ to $\pi(z_2)$, so there is indeed a saddle connection on $(X, q)$ in the same direction as $\sigma$.  Hence, the directions containing saddle connections are the same.
\end{proof}

In light of this lemma, any investigation of the Cantor-Bendixson rank of the set of saddle connection directions on $(X, q)$ can be carried out for Abelian differentials via the canonical double covering.

\section{Surgeries on Quadratic Differentials}

We introduce local surgeries of neighborhoods of poles of a quadratic differential so that any Riemann surface carrying a meromorphic quadratic differential with poles of arbitrary finite order can be associated in a meaningful way to a Riemann surface possibly with boundary carrying a meromorphic quadratic differential with at most simple poles.  The boundary can be regarded as marked line segments in the flat picture.  These marked line segments are examples of the \emph{slits} defined below.  The way we perform the surgeries will depend on the order of the poles.  We divide the cases into two sections.  In all cases we call the surgery the \emph{partial pole surgery}.  Since this is the only surgery that will appear in this paper, we will simply refer to it as the surgery.  In all cases we denote the surface after surgery with hats, i.e. $(X, q)$ after the surgery becomes $(\hat X, \hat q)$.

\subsection{Surgery of a Double Pole}

By \cite{Strebel}, if $(X, q)$ is a Riemann surface carrying a quadratic differential with a double pole at $z \in X$, then there exists a complex unit $e^{i\theta}$ such that the vertical foliation of $(X, e^{i\theta}q)$ around $z$ is given by concentric circles or equivalently as a half-infinite cylinder foliated by circles, \cite[Fig. 9]{Strebel}.

Define the \emph{partial pole surgery} for double poles as follows.  For each double pole of $q$, multiply $q$ by an appropriate complex unit that realizes the flat structure as a half-infinite cylinder foliated by closed circles and bounded by a union of saddle connections.  Consider the closed trajectory in the half-infinite cylinder of unit flat distance from the boundary of the cylinder, cut the cylinder along this trajectory and discard the half-infinite cylinder to get a translation surface with boundary.  The ``hole'' in the translation surface is the boundary of a cylinder so in the flat perspective we can regard it as a slit with no specified identification.

\begin{remark}
We remark that a similar surgery was already introduced in \cite[$\S$4]{AulicinoCompDegKZ}.  However, it is not sufficient here because it may not be well-defined in general.  In that surgery, the half-infinite cylinder was cut along its boundary instead of unit distance away from its boundary.  This could lead to the possibility that the resulting surface has zero area, and it leads to unnecessary ambiguities here that we wish to avoid.  The fact that we do not cut the cylinder along its boundary is the reason we use the term \emph{partial}.  Of course the choice of unit distance was arbitrary and any positive distance would suffice.
\end{remark}

\subsection{Surgery of a Higher Order Pole}

The partial pole surgery for higher order poles is in fact just the truncation procedure of \cite{GuptaMeroQuadHalfPlane}.  See \cite[Fig. 4]{GuptaMeroQuadHalfPlane} or \cite[$\S$ E.3]{FenyesAbelSL2RLocSys} for a flat picture of the differential where the higher order pole is at infinity, and \cite[Fig. 5]{GuptaMeroQuadHalfPlane} for a complex-analytic picture of the foliation, which corresponds to \cite[Figs. 12, 13]{Strebel}, where the higher order pole is at the origin.  Let $P$ be a point on $X$ at which $q$ has a pole of order at least three.  By \cite[Thm. 7.4]{Strebel}, there exists a simply connected open neighborhood $U \subset X$ of $P$ such that every trajectory in $U$ converges to $P$.  By taking $U$ sufficiently small so that it contains no singularity other than $P$, we have that $U$ is in the complement of every saddle connection on $(X, q)$ in every direction.  The \emph{truncation of height $H$} \cite{GuptaMeroQuadHalfPlane}, or the \emph{partial pole surgery}, is given by cutting along the half rectangles in \cite[Fig. 4]{GuptaMeroQuadHalfPlane}, and removing the infinite area region contained in the neighborhood $U$.  If $q$ has a pole of order $p$ at $P$, then the resulting slit translation surface has $2p-4$ slits on its boundary resulting from this surgery: $p-2$ slits in the vertical foliation and $p-2$ slits in the horizontal foliation.

\subsection{Applications of Surgery to Cantor-Bendixson Rank}
\label{AppsSurgCBRkSect}

We begin with a formal definition of a slit translation surface.

\begin{definition}
A \emph{slit translation surface} $(\hat X, \hat \omega)$ is a translation surface $(X, \omega)$ with (possibly empty) boundary and a finite collection of marked line segments $\{\sigma_1, \ldots, \sigma_n\}$ called \emph{slits}.  By convention, we assume that every boundary component of $(\hat X, \hat \omega)$ is contained in the set of slits.
\end{definition}

The motivation for the surgery is that we are interested in exactly the saddle connections on a Riemann surface that do not cross the slits because they correspond to trajectories that converge to a higher order pole.  Hence, a saddle connection that avoids a slit on $(\hat X, \hat q)$ avoids a higher order pole on $(X, q)$.  Furthermore, Lemma \ref{CBRankCov}, implies that it suffices to pass to Abelian differentials for the remainder of the paper as well.

Another difficulty we will encounter below is that when we wish to find an upper bound for the Cantor-Bendixson rank, we wish to use Lemma \ref{CantorBendClosedSetBd} to forget all but one slit.  However, this is meaningless when $(\hat X, \hat \omega)$ has boundary because there is no natural way to forget a boundary and get a Riemann surface without boundary.  Furthermore, there is no natural identification of the slits in the boundary because as observed in \cite{GuptaMeroQuadHalfPlane}, if a higher order pole has a non-zero ``metric residue,'' then to each slit there is a parallel slit, but it does not necessarily have the same length, so identification is impossible.  In spite of this difficulty, there is a simple remedy.  

\begin{definition}
Let $(\hat X, \hat \omega)$ be a slit translation surface with boundary.  Consider a second copy of it $(\hat X, -\hat \omega)$, which is rotated by $\pi$.  Then every boundary slit on $(\hat X, \hat \omega)$ trivially has a corresponding slit with the same length and orientation on $(\hat X, -\hat \omega)$.  Identify each boundary slit on $(\hat X, \hat \omega)$ with its copy on $(\hat X, -\hat \omega)$.  We call this procedure the \emph{doubling construction} and denote the resulting slit translation surface by $(\hat X, \hat \omega)^2$.
\end{definition}

The slits on $(\hat X, \hat \omega)^2$ are simply marked line segments.  Loosely speaking, they occur at the boundary of each copy of $(\hat X, \hat \omega)$ in $(\hat X, \hat \omega)^2$.  Now we prove that the doubling construction does indeed produce a translation surface.

\begin{lemma}
\label{DoubYieldsTranSurf}
If $(\hat X, \hat \omega)$ is a slit translation surface with boundary, then the doubling procedure produces a slit translation surface $(\hat X, \hat \omega)^2$ without boundary.
\end{lemma}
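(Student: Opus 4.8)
The plan is to verify the three conditions defining a closed slit translation surface: that the gluing removes all boundary, that the flat charts across the glued slits are translations, and that the resulting cone angles lie in $2\pi\bZ$. First I would dispose of the topology. By convention every boundary component of $(\hat X, \hat \omega)$ is a slit, and the doubling identifies each such boundary slit with the identical boundary slit of the rotated copy $(\hat X, -\hat \omega)$; since the two copies have the same boundary, this identification exhausts the boundary of both, so $(\hat X, \hat \omega)^2$ is a closed oriented surface with empty boundary. Each former boundary slit survives as an interior marked segment, and the interior slits already present in each copy are untouched, so the marked-segment data required of a slit translation surface is inherited. It then remains only to produce the translation structure.

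Next, and this is the crux, I would check that the chart transitions across each glued slit are translations. Inside each copy the charts are the original translation charts (for $\hat \omega$ on the first copy, for $-\hat \omega$ on the second), whose transitions are translations. I would develop a boundary slit $\sigma$ of the first copy to a straight segment with holonomy vector $v = \int_\sigma \hat \omega$, the surface lying on one side; on the second copy the same slit carries $-\hat \omega$, hence develops to a segment with holonomy vector $-v$ and, because multiplication by $-1$ is rotation by $\pi$, with the surface lying on the \emph{opposite} side. Translating the second copy so that the two developed segments coincide, the two pieces fill the two sides of one segment and are identified along it by the identity, i.e.\ by a translation. The essential point—and the reason the construction uses $-\hat \omega$ rather than the complex conjugate $\overline{\hat \omega}$—is that the $\pi$-rotation makes the seam gluing a \emph{translation} rather than a reflection, so no orientation reversal and no nontrivial linear holonomy is introduced; the global $1$-form is then $\hat \omega$ on the first copy and $-\hat \omega$ on the second, and these agree across every slit.

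Finally, since every transition map is a translation, its linear part is the identity, so the linear holonomy around every loop is trivial. In particular the cone angle at each point where the two copies meet, namely the former slit endpoints, is the sum of the two equal contributions from the copies and necessarily lies in $2\pi\bZ$ by triviality of the linear holonomy. A closed flat surface with all cone angles in $2\pi\bZ$ and trivial linear holonomy is precisely a translation surface, the holomorphic $1$-form being the globally glued $\hat \omega / -\hat \omega$. Together with the topological step this exhibits $(\hat X, \hat \omega)^2$ as a slit translation surface without boundary.

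I expect the main obstacle to be the second step: keeping track of orientations and signs carefully enough to confirm that the $\pi$-rotated copy glues to each slit by a translation, so that the double is a genuine translation surface rather than merely a half-translation surface (quadratic differential), which is what the classical reflection double would produce. Once the transitions are known to be translations, the boundary count is purely topological and the cone-angle condition follows formally from triviality of the linear holonomy.
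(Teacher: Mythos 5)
Your argument is correct in substance, but it takes a genuinely different route from the paper's. You verify the translation structure directly at the level of charts: developing a boundary slit via $\hat \omega$ gives a segment with holonomy $v$ and the surface on one side, developing the second copy via $-\hat \omega$ gives holonomy $-v$ with the surface on the opposite side, and matching the two developed segments by a translation produces a seam whose transition map is a translation; trivial linear holonomy then forces every cone angle into $2\pi\mathbb{Z}$. The paper argues structurally instead: it folds each boundary slit onto itself at its midpoint (for a closed slit, at a point and its antipode), producing a closed \emph{half-translation} surface $A$ with a simple pole at each fold point, invokes Hubbard--Masur to obtain the quadratic differential on $A$, and then observes that the canonical double cover of $A$ branched at those points is exactly $(\hat X, \hat \omega)^2$, which is therefore a translation surface. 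Your route is more elementary and self-contained, avoiding Hubbard--Masur entirely; the paper's route buys compatibility with the canonical double covering machinery already used in Lemma \ref{CBRankCov} and pins down the identification along each slit unambiguously.

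Two points in your write-up should be tightened. First, ``identified along it by the identity'' is only correct in the developed coordinates: on the underlying surface, the translation gluing necessarily matches the point at parameter $s$ of the slit in one copy with the point at parameter $1-s$ in the other copy, i.e., it reverses the slit end for end. (The genuine pointwise-identity identification would have transition $z \mapsto -\bar z + c$, an orientation-incompatible reflection gluing --- precisely the classical double you are trying to avoid.) Consequently, the cone angle at a glued endpoint is the sum of the angles at \emph{opposite} endpoints of the slit, not ``two equal contributions''; this does not damage the proof, since your holonomy argument yields the $2\pi\mathbb{Z}$ conclusion regardless, but the phrasing conflates the two inequivalent gluings. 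Second, a slit arising from the double-pole surgery is a closed geodesic with no endpoints, so ``develop to a straight segment'' needs the modification that the developed image is a segment with its ends identified; your translation-gluing argument still goes through (with an arbitrary choice of twist), and this is exactly the case the paper handles by folding at a point and its antipode.
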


\begin{proof}
To prove this, we construct a half-translation surface without boundary and show that its canonical double cover exactly coincides with $(\hat X, \hat \omega)^2$.  Hence, $(\hat X, \hat \omega)^2$ must be a translation surface.

Given $(\hat X, \hat \omega)$, mark the midpoint of every slit that is in the boundary of $(\hat X, \hat \omega)$.  If there is no canonical midpoint, such as in the surgery of a double pole of a quadratic differential, then the slit is a closed curve; we mark any point and its antipode.  Consider the identification given by identifying each half of the slit to form a simple pole of a quadratic differential at the midpoint of every slit in a boundary.  For a slit arising from the double pole of a quadratic differential, we also get a simple pole at the antipode.  Let $A$ denote the resulting flat surface.

We claim that $A$ does indeed carry a meromorphic quadratic differential with at most simple poles.  Observe the existence of canonical measured foliations on the interior of $A$ induced by $\hat \omega$.  Since the angles about the midpoint of every slit is $\pi$, every foliation induced by $\hat \omega$ remains a foliation on $A$, though it is no longer orientable.  Thus, $A$ admits a quadratic differential by \cite{HubbardMasur}.

Finally, we observe that the double cover applied to $A$ is ramified over the midpoint (and antipode) of every slit.  This results in every boundary slit being identified to its copy rotated by $\pi$ in the definition of the canonical double covering construction, which is also exactly the identification in the doubling construction.  Hence, these constructions yield the same surface, which consequentially must be a translation surface.
\end{proof}

Now we explain the conventions that we use to extend some standard terminology to slit translation surfaces.  There are two conventions we introduce for slit translation surfaces depending on the context in which they arise.  Given a translation surface $(X, \omega)$, possibly with marked points, let $\Sigma$ denote the set of points in $X$ that are either marked or at which $\omega$ has a singularity.

\begin{convention}
\label{SlitTransConv}
Given a \emph{slit translation surface}, $\Sigma$ includes the endpoints of every slit.
\end{convention}

\begin{convention}
\label{MeromorConv}
Given a \emph{meromorphic quadratic differential} $(X, q)$ and letting $(\hat X, \hat \omega)$ be the slit translation surface arising from applying the surgery to $(X, q)$, we do \emph{not} include the endpoints of the slits of $(\hat X, \hat \omega)$ in $\Sigma$.
\end{convention}

The reason for the two different conventions is that, by including the endpoints of the slit, we are forced to consider saddle connections that emanate or terminate at these endpoints.  However, if the object of study is a meromorphic quadratic differential, then the slits that are introduced by the surgery are artificial in nature and including endpoints of them would introduce saddle connections on the associated slit translation surface that do not correspond to saddle connections on the meromorphic quadratic differential.  This distinction is important in Section \ref{PfGenGMNLemSect}, where we will primarily be studying meromorphic quadratic differentials.  Sections \ref{SnExSect} and \ref{BdCBRankSect}, on the other hand, will use Convention \ref{SlitTransConv} where the endpoints of the slits are included.

\begin{definition}
Let $(\hat X, \hat \omega)$ be a slit translation surface.  Let $\cF_{\theta}$ denote the vertical foliation of $(X, e^{i\theta}q)$.  Define the set
$$\Theta(\hat X, \hat \omega) = \left \{ \theta \in [0,\pi) \middle \vert  \begin{array}{l}\cF_{\theta} \text{ admits a saddle connection whose interior } \\
 \text{is disjoint from the interior of every slit} \end{array} \right\}.$$
By convention, $\Theta(\hat X, \hat \omega)$ will always include the finite set of directions in which the slits lie.
\end{definition}

In simpler terms, we only consider saddle connections on slit translation surfaces that do not cross any of the slits.

\begin{lemma}
\label{DoubSameCBRk}
If $(\hat X, \hat \omega)$ is a slit translation surface with boundary, then 
$$\Theta(\hat X, \hat \omega)^2 = \Theta(\hat X, \hat \omega).$$
\end{lemma}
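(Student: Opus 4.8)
The plan is to establish the two inclusions $\Theta(\hat X, \hat \omega) \subseteq \Theta(\hat X, \hat \omega)^2$ and $\Theta(\hat X, \hat \omega)^2 \subseteq \Theta(\hat X, \hat \omega)$ directly from the geometry of the doubling construction, exploiting the fact that the glued boundary slits form the \emph{only} interface between the two copies of $(\hat X, \hat \omega)$ sitting inside $(\hat X, \hat \omega)^2$. First I would dispose of the directions in which the slits lie. Because the copy $(\hat X, -\hat \omega)$ is rotated by $\pi$, every slit of $(\hat X, \hat \omega)^2$ is parallel modulo $\pi$ to a slit of $(\hat X, \hat \omega)$, and conversely; hence the two surfaces carry the same finite set of slit directions, and these are contained in both $\Theta$-sets by convention. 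I may therefore restrict attention to directions $\theta$ that are not the direction of any slit.

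For the inclusion $\Theta(\hat X, \hat \omega) \subseteq \Theta(\hat X, \hat \omega)^2$, I would use the tautological embedding of one copy of $(\hat X, \hat \omega)$ into $(\hat X, \hat \omega)^2$. A saddle connection on $(\hat X, \hat \omega)$ whose interior is disjoint from the interior of every slit has endpoints in $\Sigma$ (which includes the slit endpoints by Convention \ref{SlitTransConv}) and, under the embedding, remains a segment in a leaf of the foliation with the same endpoints and the same direction. Its interior still avoids the interior of every slit, since the boundary slits merely become interior marked segments and the interior slits of $(\hat X, \hat \omega)$ are unchanged. Thus its direction lies in $\Theta(\hat X, \hat \omega)^2$.

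For the reverse inclusion, which is the crux, I would argue that any admissible saddle connection on $(\hat X, \hat \omega)^2$ is forced to live in a single copy. The glued boundary slits constitute the seam joining the two copies, and after removing the open interiors of these slits the two copies of $\hat X$ meet only at the finitely many slit endpoints, all of which lie in $\Sigma$. A saddle connection $\sigma$ counted by $\Theta(\hat X, \hat \omega)^2$ has interior disjoint from the interior of every slit, in particular from the seam, and disjoint from $\Sigma$; since its interior is connected, it lies entirely inside one of the two open copies. That copy, together with its slit data, is isometric to $(\hat X, \hat \omega)$, so $\sigma$ is precisely a saddle connection on $(\hat X, \hat \omega)$ avoiding the interior of every slit, and its direction lies in $\Theta(\hat X, \hat \omega)$.

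The main obstacle I anticipate is making the ``connectedness forces it into one copy'' step fully rigorous, i.e. verifying that $\sigma$ cannot pass from one copy to the other through a shared slit endpoint on the seam. This is handled by recalling that a saddle connection contains no point of $\Sigma$ in its interior, so it cannot traverse those endpoints; if one prefers to allow such passages, one instead subdivides $\sigma$ at the finitely many $\Sigma$-points it meets into sub-arcs, each contained in a single copy and each a saddle connection there in the same direction $\theta$. Combining the equality of slit directions with the two inclusions on the remaining directions yields $\Theta(\hat X, \hat \omega)^2 = \Theta(\hat X, \hat \omega)$.
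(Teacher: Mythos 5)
Your proof is correct and takes essentially the same approach as the paper: both establish the two inclusions, deducing $\Theta(\hat X, \hat \omega) \subseteq \Theta(\hat X, \hat \omega)^2$ from the fact that directions of saddle connections are preserved under the embedding of each copy, and $\Theta(\hat X, \hat \omega)^2 \subseteq \Theta(\hat X, \hat \omega)$ from the fact that an admissible saddle connection, avoiding the interiors of the seam slits, is confined to a single copy of $(\hat X, \hat \omega)$. Your write-up simply makes explicit (via the connectedness-of-the-interior argument and the subdivision at slit endpoints) what the paper compresses into ``by definition of a saddle connection on a slit translation surface.''
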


\begin{proof}
By definition of a saddle connection on a slit translation surface, $\Theta(\hat X, \hat \omega)^2 \subseteq \Theta(\hat X, \hat \omega)$.  On the other hand, since the direction of each saddle connections in each copy of $(\hat X, \hat \omega)$ is preserved under the doubling construction, equality of the sets follows.
\end{proof}

Thus, it suffices to restrict our attention to slit translation surfaces without boundary.  We make one further simplification of the setting at hand.  Given a slit translation surface without boundary, we can forget all but one slit.  By Lemma \ref{CantorBendClosedSetBd}, any upper bound on the Cantor-Bendixson rank of the set of saddle connection directions on a slit translation surface with one slit automatically implies an upper bound on the Cantor-Bendixson rank of the set of saddle connection directions on a slit translation surface with arbitrarily many slits.  For this reason, we switch perspective for the remainder of the paper and devote our study to finite area translation surfaces with slits.

\section{Slit Tori}

In this section we focus on a torus carrying an Abelian differential with a collection of slits.  However, we collect general results throughout.  From this perspective, the torus can be realized as a parallelogram (or a square without loss of generality) with a collection of slits.  We show that depending on the arrangement of the slits the Cantor-Bendixson rank can be one, two, or three, and it can never be greater than three.

\begin{warning}
The tori in this section arise exclusively from Abelian differentials and \emph{not} from quadratic differentials.  In general, a torus carrying a finite area quadratic differential cannot be realized as a parallelogram.  See \cite{AthreyaEskinZorichCountGenJSDiffs} for examples of how complicated the flat geometry of a quadratic differential on a sphere could be.
\end{warning}

\subsection{Cantor-Bendixson Rank $1$}

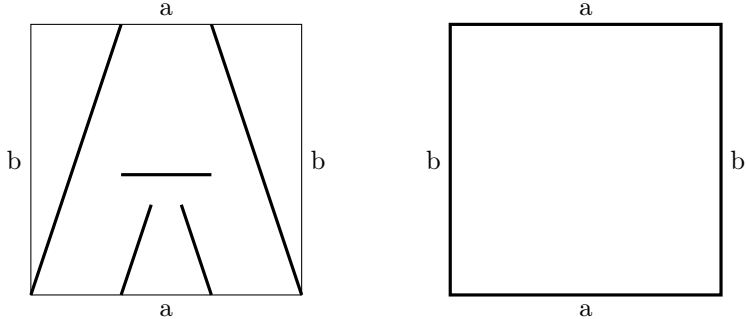
\begin{figure}[h!]
\centering
\begin{minipage}[t]{0.45\linewidth}
\centering
\begin{tikzpicture}[scale=0.40]
\draw (0,0) rectangle (9,9);
\draw [very thick] (0,0)--(3,9);
\draw [very thick] (9,0)--(6,9);
\draw [very thick] (3,0)--(4,3);
\draw [very thick] (6,0)--(5,3);
\draw [very thick] (3,4)--(6,4);
\draw(4.5,9) node[above] {a};
\draw(0,4.5) node[left] {b};
\draw(4.5,0) node[below] {a};
\draw(9,4.5) node[right] {b};
\end{tikzpicture}
\end{minipage}
\begin{minipage}[b]{0.45\linewidth}
\centering
\begin{tikzpicture}[scale=0.40]
\draw [very thick] (0,0) rectangle (9,9);
\draw(4.5,9) node[above] {a};
\draw(0,4.5) node[left] {b};
\draw(4.5,0) node[below] {a};
\draw(9,4.5) node[right] {b};
\end{tikzpicture}
\end{minipage}
\caption{A torus with three slits marked by thickened line segments (left), and a torus with two coinciding slits (right)}
\label{Rank1ExFig}
\end{figure}

\begin{lemma}
\label{FinCyl}
If a meromorphic Abelian differential contains a cylinder disjoint from the slits, then $\Theta(\hat X, \hat \omega)$ is infinite.
\end{lemma}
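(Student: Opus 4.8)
The plan is to exploit the two boundary components of the given cylinder to manufacture a sequence of saddle connections whose directions are pairwise distinct (accumulating onto the core direction of the cylinder). Since the cylinder is disjoint from the slits by hypothesis, every saddle connection built inside it automatically has interior disjoint from the slits, so each such direction lands in $\Theta(\hat X, \hat \omega)$, and infinitely many distinct directions force $\Theta(\hat X, \hat \omega)$ to be infinite.

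First I would normalize coordinates so that the given cylinder $C$ is horizontal, realizing it as $\bR/c\bZ \times [0,h]$, where $c > 0$ is the circumference and $h > 0$ is the height. The boundary of a cylinder is a union of saddle connections joining points of $\Sigma$, so each of the two boundary circles carries at least one point of $\Sigma$; fix a point $p = (a,0)$ on the bottom boundary and a point $q = (b,h)$ on the top boundary. Passing to the strip $\bR \times [0,h]$ covering $C$, the lifts of $q$ are exactly the points $(b + nc, h)$ for $n \in \bZ$.

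Next, for each $n \in \bZ$ I would take the straight segment $\gamma_n$ joining $(a,0)$ to $(b+nc,h)$. This segment lies entirely in the closed strip, so it projects to a segment of $C$ whose interior is contained in the open cylinder; since the open cylinder contains no singularities (its leaves are regular and closed), $\gamma_n$ is a genuine saddle connection from $p$ to $q$. As a direction, $\gamma_n$ is parallel to the vector $(b - a + nc,\, h)$, and two such vectors agree modulo $\pi$ exactly when their first coordinates agree; because $n \mapsto b - a + nc$ is injective, the directions of the $\gamma_n$ are pairwise distinct and converge to the horizontal core direction as $n \to \pm\infty$. Each $\gamma_n$ lies in the cylinder, which is disjoint from the slits, so the interior of $\gamma_n$ misses the interior of every slit, and its direction therefore belongs to $\Theta(\hat X, \hat \omega)$. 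This exhibits infinitely many elements of $\Theta(\hat X, \hat \omega)$.

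The step deserving the most care is confirming that both boundary circles genuinely carry points of $\Sigma$ that can serve as the endpoints $p$ and $q$, and that $\gamma_n$ meets the definition of a saddle connection on a slit translation surface. The former is where I would invoke the structure of a cylinder boundary as a concatenation of saddle connections between points of $\Sigma$, valid under either Convention \ref{SlitTransConv} or Convention \ref{MeromorConv}; the latter is immediate because the interior of $\gamma_n$ stays in the open cylinder while the hypothesis places the entire cylinder off the slits. I do not anticipate a genuine obstacle here, only the bookkeeping needed to verify that the convention in force on $\Sigma$ does not interfere with $p$ and $q$ lying on $\partial C$ and with $\gamma_n$ avoiding the slit interiors.
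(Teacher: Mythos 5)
Your proof is correct and follows essentially the same route as the paper's: both arguments pick a singularity on each boundary component of the cylinder and produce infinitely many saddle connections winding around the cylinder, whose pairwise distinct directions all lie in $\Theta(\hat X, \hat \omega)$ because the cylinder avoids the slits. Your write-up merely makes the paper's ``winding'' trajectories explicit via the strip covering $\bR \times [0,h]$, which is a fine (and slightly more rigorous) presentation of the same idea.
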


\begin{proof}
If $(\hat X, \hat \omega)$ contains a finite cylinder, then there is a zero on its top boundary and a zero on its bottom boundary.  Since the cylinder is disjoint from the slits, there are infinitely many trajectories (saddle connections) between these two (not necessarily distinct) zeros that correspond to winding around the cylinder.  Thus, the set of directions that admit saddle connections is infinite.
\end{proof}

Lemma \ref{FinCyl} implies that a prerequisite to finding an example of a slit translation surface such that $\Theta(\hat X, \hat \omega)$ is finite is the absence of finite cylinders disjoint from the slits.

\begin{lemma}
\label{FigHasCB1}
Let $(\hat X,\hat \omega)$ denote either torus depicted in Figure \ref{Rank1ExFig}.  Then $\Theta(\hat X, \hat \omega)$ is finite, and in particular, $\Theta(\hat X, \hat \omega)$ has Cantor-Bendixson rank one.
\end{lemma}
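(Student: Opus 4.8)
The plan is to prove that $\Theta(\hat X, \hat\omega)$ is a finite nonempty set; since every nonempty finite set has Cantor-Bendixson rank one (as noted just after the definition of Cantor-Bendixson rank), this yields the lemma. Nonemptiness is immediate, because by convention $\Theta(\hat X,\hat\omega)$ already contains the directions of the slits, and in any case it is nonempty by Lemma \ref{SCAlwExistLem}. Thus the entire content is to bound the set of slit-avoiding saddle connection directions above by a finite number, and here Lemma \ref{FinCyl} serves as a guide: since a cylinder disjoint from the slits would force $\Theta(\hat X,\hat\omega)$ to be infinite, any successful argument must in particular certify that no such cylinder exists.

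For the torus on the right of Figure \ref{Rank1ExFig} this is transparent. The two slits are the horizontal and vertical core curves of the square, and cutting the torus along them returns the open square, which is a disc. A saddle connection whose interior avoids the slits is then a straight segment in the closed square joining marked points, and the only marked point is the single corner; the only such segments are the two diagonals, of slopes $+1$ and $-1$. Together with the two slit directions, this exhibits $\Theta(\hat X,\hat\omega)$ as a set of four directions, hence finite.

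For the torus on the left I would pass to the universal cover $\mathbb{R}^2$ with deck lattice $\Lambda = 9\mathbb{Z}^2$ and lift the slits. A short computation shows that the two oblique slits each lift to a \emph{single} straight segment: the slope-$3$ slit lifts to the segment from $(0,0)$ to $(4,12)$ and the slope-$(-3)$ slit to the segment from $(9,0)$ to $(5,12)$ (both of vertical extent $12$), while the horizontal slit lifts to the segment from $(3,4)$ to $(6,4)$; the full lifted configuration is the union of the $\Lambda$-translates of these three segments. In this picture a slit-avoiding saddle connection is exactly a straight segment between two $\Lambda$-translates of marked points whose interior meets none of the lifted slits. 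I would then prove the key claim that the lifted slits cut $\mathbb{R}^2$ into \emph{bounded} cells, equivalently that there is a uniform constant $L$ with every slit-avoiding segment of length at most $L$. Granting this, a slit-avoiding saddle connection joins two points among the $\Lambda$-translates of the finite set of marked points at distance at most $L$; there are only finitely many difference vectors of length at most $L$ between such points, and since the direction of a saddle connection depends only on this difference vector, only finitely many directions occur and $\Theta(\hat X,\hat\omega)$ is finite.

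The main obstacle is precisely the boundedness of the cells, and this is where the placement of the slits enters. Each oblique family spans more than one vertical period ($12 > 9$), so the translates of the slope-$3$ and slope-$(-3)$ segments leave no vertical gaps and recur at horizontal spacing $9$; this blocks travel in every direction not too close to the two oblique slit slopes, since such a line crosses a transverse oblique family within bounded length. The residual near-vertical channel through the middle of the square is exactly what the short oblique segments $B,D$ and the interior horizontal segment $E$ are positioned to seal: the $x$-projections of the three lifted families already cover a full period, $[0,4]\cup[3,6]\cup[5,9]=[0,9]$, and because the oblique families have no vertical gaps while $E$ recurs vertically with period $9$, every vertical line meets a slit within bounded height. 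I would organize the verification as a finite case analysis over the cone of candidate escape directions, showing in each case that one of the three families is crossed within a bounded distance; this simultaneously rules out a cylinder disjoint from the slits, as demanded by Lemma \ref{FinCyl}. The delicate point is to convert this direction-by-direction blocking into a single uniform length bound $L$, rather than a bound that degenerates as the direction approaches a slit slope.
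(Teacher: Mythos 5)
Your proposal follows the same skeleton as the paper's proof: the slits block every trajectory from winding around the torus, hence every slit-avoiding saddle connection has uniformly bounded length, hence there are only finitely many saddle connections and $\Theta(\hat X, \hat \omega)$ is finite. The paper's entire proof is exactly this, in three sentences, with the blocking property asserted as evident from Figure \ref{Rank1ExFig}. Your treatment of the right-hand torus is a complete and correct elaboration of that assertion, and your universal-cover setup for the left-hand torus is accurate as well --- in particular the observation that the slope-$\pm 3$ slits concatenate in the cover into single segments of vertical extent $12 > 9$, which is precisely what makes the configuration blocking.

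The one place you stop short is the ``delicate point'' you flag at the end, and it is worth saying that it is not actually delicate: no direction-by-direction analysis, and no uniformity-in-the-direction argument, is needed. You already wrote down the right formulation, namely that every connected component of the complement of the lifted slits in $\mathbb{R}^2$ is bounded. A saddle connection whose interior avoids the slits lies in the closure of a single such component (its interior also avoids the slit endpoints, since those are marked points), so its length is at most the diameter of that component. Since the lifted configuration is invariant under the lattice $9\mathbb{Z}^2$, there are only finitely many components up to translation, so boundedness of each component automatically yields a single uniform constant $L$; the potential degeneration near slit slopes never enters. What remains is then a finite inspection: the five slits cut the fundamental square into finitely many polygonal cells, and one checks from the figure (using the edge identifications and the fact that the two oblique families have overlapping vertical extents) that each cell is a bounded polygon. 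With that substitution your argument closes completely, and it is the same argument as the paper's, with the paper's one-line claim that ``no trajectory can wind around the surface without passing through a slit'' unpacked into an explicit verification.
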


\begin{proof}
Since no trajectory can wind around the surface without passing through a slit, any saddle connection must have length bounded above.  However, there are only finitely many trajectories between marked points on any surface with bounded length.  Hence, $\Theta(\hat X, \hat \omega)$ is finite.
\end{proof}

\subsection{Cantor-Bendixson Rank $2$}

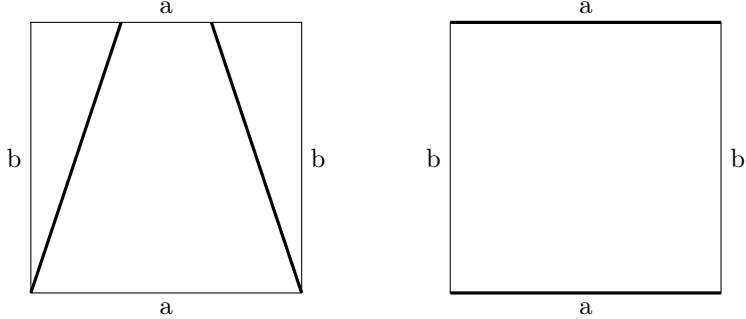
\begin{figure}[h!]
\centering
\begin{minipage}[t]{0.45\linewidth}
\centering
\begin{tikzpicture}[scale=0.40]
\draw (0,0) rectangle (9,9);
\draw [very thick] (0,0)--(3,9);
\draw [very thick] (9,0)--(6,9);
\draw(4.5,9) node[above] {a};
\draw(0,4.5) node[left] {b};
\draw(4.5,0) node[below] {a};
\draw(9,4.5) node[right] {b};
\end{tikzpicture}
\end{minipage}
\begin{minipage}[b]{0.45\linewidth}
\centering
\begin{tikzpicture}[scale=0.40]
\draw (0,0) rectangle (9,9);
\draw [very thick] (0,0)--(9,0);
\draw [very thick] (0,9)--(9,9);
\draw(4.5,9) node[above] {a};
\draw(0,4.5) node[left] {b};
\draw(4.5,0) node[below] {a};
\draw(9,4.5) node[right] {b};
\end{tikzpicture}
\end{minipage}
\caption{A torus with two slits marked by thickened line segments (left), and a torus with one slit with coinciding endpoints (right)}
\label{Rank2ExFig}
\end{figure}

\begin{lemma}
\label{TorusCylAccSlitDir}
If $(\hat X, \hat \omega)$ is a torus with a slit, then the accumulation point of the set of cylinder directions of cylinders disjoint from the slit must lie exactly in the direction of the slit.
\end{lemma}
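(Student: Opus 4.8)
The plan is to reduce the statement to a single inequality governing when a direction carries a cylinder disjoint from the slit, and then to read off the location of accumulation points from that inequality. First I would realize the torus as $\bR^2/\Lambda$ with $\hat\omega = dz$, write $A$ for its (finite, positive) area, and let $S$ be the slit, a segment of length $\ell > 0$ in direction $\alpha$. On a torus, closed leaves only occur in rational directions, i.e. in the direction $\theta$ of a primitive lattice vector $v \in \Lambda$; these are therefore the only candidate cylinder directions. In such a direction the entire torus is a \emph{single} flat cylinder of circumference $|v|$ and height $d_v = A/|v|$, where the height coordinate is arclength in the direction perpendicular to $v$, taken modulo $d_v$. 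Inside this cylinder the slit $S$ occupies an interval of heights whose length equals the perpendicular extent of $S$, namely $\ell\,|\sin(\theta - \alpha)|$ (with the understanding that if this exceeds $d_v$ it wraps all the way around).

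The key reduction is then the following criterion: a sub-band of closed leaves avoiding $S$ exists if and only if the slit fails to cover every height, i.e. if and only if there is a gap
\[
\ell\,|\sin(\theta - \alpha)| < d_v = \frac{A}{|v|},
\qquad\text{equivalently}\qquad
|v|\,\ell\,|\sin(\theta - \alpha)| < A .
\]
When this holds the leaves at heights outside the slit's interval sweep out an embedded cylinder disjoint from $S$; when it fails the slit's height-interval wraps around and meets every closed leaf, so no cylinder avoids it. Thus the set of cylinder directions disjoint from the slit is exactly the set of directions $\theta$ of primitive $v \in \Lambda$ satisfying the displayed inequality.

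Finally I would extract the accumulation statement. Fix any closed arc $J \subset [0,\pi)$ of directions with $\alpha \notin J$. On $J$ the quantity $|\sin(\theta - \alpha)|$ is bounded below by some $c > 0$, so any cylinder direction in $J$ forces $|v| < A/(c\,\ell)$. Since only finitely many primitive lattice vectors have norm below a fixed bound, only finitely many directions in $J$ carry a cylinder disjoint from the slit; hence $J$ contains no accumulation point of the cylinder-direction set. As $J$ was an arbitrary arc missing $\alpha$, the only possible accumulation point is $\alpha$ itself, which is exactly the direction of the slit. I expect the main obstacle to be the careful justification of the gap criterion — in particular verifying that "disjoint from the slit" is governed precisely by the perpendicular projection onto the cylinder's height coordinate, correctly accounting for the wrap-around modulo $d_v$, and confirming at the outset that cylinder directions are exactly the rational (primitive lattice vector) directions. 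Once that geometric dictionary is in place, the counting argument is immediate.
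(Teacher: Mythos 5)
Your proof is correct, and it takes a genuinely different (more quantitative) route than the paper's. The paper argues by contradiction: if cylinder directions $\theta_n$ accumulated at some $\theta'$ other than the slit direction, the closed curves in directions $\theta_n$ would have lengths tending to infinity, so their intersection points with a fixed unit transversal perpendicular to $\theta'$ would become dense; once the gaps between consecutive intersection points drop below the (positive) length of the slit's projection onto that transversal, no closed curve in direction $\theta_n$ can avoid the slit, a contradiction. Your argument runs the same underlying mechanism --- the slit's perpendicular shadow versus the transverse spacing of closed leaves --- directly rather than by contradiction: you parametrize cylinder directions by primitive lattice vectors $v$, note that in direction $v$ the torus is a single cylinder whose leaf space is a circle of circumference $A/|v|$ (which is exactly what disposes of the wrap-around worry you flag), and extract the criterion $|v|\,\ell\,|\sin(\theta-\alpha)| < A$ for existence of a cylinder disjoint from the slit; only the forward implication of this criterion is actually needed. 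Counting primitive lattice vectors of bounded norm then gives \emph{finiteness} of the set of such directions in any closed arc avoiding $\alpha$, which is slightly stronger than non-accumulation. Your version also buys some rigor: the paper's assertion that the closed curves' lengths tend to infinity along the sequence is itself implicitly this same lattice count, so you make explicit the one step the paper leaves informal, and you replace the ``intersections become dense'' claim by the exact spacing $A/|v|$. What the paper's formulation buys in exchange is that its transversal-density argument is closer in spirit to the accumulation arguments (Lemma \ref{InfSCImpFinCyl}) it deploys later for general slit translation surfaces, where no lattice structure is available.
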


\begin{proof}
By contradiction, assume that there is an accumulation point $\theta' \in \Theta(\hat X, \hat \omega)$ of directions of cylinders in a direction distinct from that in which a slit lies on the torus.  This implies that there is a sequence of directions containing cylinders $\{\theta_n\}$ converging to $\theta'$.  Let $s$ be the slit on the torus, which by assumption does not lie in direction $\theta'$.  In particular, if we consider a line segment $\sigma$ of unit length in the direction perpendicular to $\theta'$, then the projection of $s$ onto $\sigma$ yields a line segment $s'$ with positive length.  Furthermore, as $n$ goes to infinity, the lengths of the closed curves of cylinders in direction $\theta_n$ tend to infinity, and their intersections with $\sigma$ become dense in $\sigma$.  Hence, for $N$ sufficiently large, the target distance between the points of intersection of the closed trajectory in direction $\theta_N$ and $\sigma$ is less than the length of $s'$, which implies that there is no cylinder on the torus in direction $\theta_N$ disjoint from the slit $s$.  This contradiction completes the proof of the lemma.
\end{proof}

Let $\{s_n\}$ be an infinite sequence of saddle connections on $(X, \omega)$.  Define a subset of the flat surface by
$$\Omega(\{s_n\}) := \bigcap_{n = 1}^{\infty}\overline{\bigcup_{k \geq n}s_k}.$$
We will suppress the sequence when it is understood.  Observe that by the compactness of $X$, $\Omega(\{s_n\}) \not = \emptyset$.

\begin{lemma}
\label{InfSCImpFinCyl}
Given an infinite sequence of saddle connections $\{s_n\}$ with a single accumulation point of directions $\theta'$ on a slit translation surface $(\hat X, \hat \omega)$, the set $\Omega(\{s_n\})$ is a finite union of parallel invariant components of $(\hat X, \hat \omega)$ in the direction $\theta'$.  Furthermore, there exists a finite cylinder $C \subset \Omega(\{s_n\}) \subset (\hat X, \hat \omega)$ disjoint from the slits that does not necessarily lie in the direction $\theta'$.
\end{lemma}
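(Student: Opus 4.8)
The plan is to extract the coarse features of the sequence, then identify $\Omega := \Omega(\{s_n\})$ as a closed set invariant under the flow in direction $\theta'$, and finally run a structure-theorem argument to realize it as a finite union of complete invariant components and to locate a finite cylinder inside it. First I would record that only finitely many saddle connections have length below any fixed bound, so after discarding finitely many terms the lengths $\ell(s_n)$ tend to infinity while the directions $\theta_n \to \theta'$.

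Next I would show that $\Omega$ is invariant under the vertical flow of $(\hat X, e^{i\theta'}\hat\omega)$. Given a regular point $p \in \Omega$ and points $p_k \in s_{n_k}$ with $p_k \to p$, at least one of the two arcs of $s_{n_k}$ emanating from $p_k$ has length tending to infinity; parametrizing it by arc length, the point at parameter $t$ is $p_k$ translated by $t(\cos\theta_{n_k}, \sin\theta_{n_k})$ in flat coordinates, which converges to the point at flat distance $t$ along the $\theta'$-leaf through $p$, for every fixed $t$ for which that leaf segment avoids $\Sigma$. Hence the entire $\theta'$-leaf through $p$ lies in the closed set $\Omega$, so $\Omega$ is a union of $\theta'$-leaves, each disjoint from the interiors of the slits since every $s_n$ is.

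I would then invoke the decomposition of $(\hat X, \hat\omega)$ in the direction $\theta'$ into finitely many invariant components and show that whenever $\Omega$ meets the interior of such a component it contains all of it. For a minimal component this is immediate from the density of a regular leaf together with the closedness of $\Omega$. For a $\theta'$-cylinder of circumference $c$ I would use that each $s_{n_k}$ crosses it as a single arc in direction $\theta_{n_k}\neq\theta'$ that winds around roughly $\ell(s_{n_k})/c$ times while drifting across the full height; as $\theta_{n_k}\to\theta'$ the number of windings tends to infinity and these arcs equidistribute, so the whole cylinder lies in $\Omega$. This yields the first assertion: $\Omega$ is a finite union of parallel invariant components in direction $\theta'$, each disjoint from the slits, and since $(\hat X, \hat\omega)$ has finite area every such cylinder is automatically a \emph{finite} cylinder. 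For the furthermore I would split into cases: if some component of $\Omega$ is a $\theta'$-cylinder it is the desired $C$ (lying in direction $\theta'$); otherwise every component is minimal, and fixing one such $M$ I would pass to its metric completion, cap or double its $\theta'$-boundary saddle connections to form a closed translation surface, and apply Theorem \ref{MasurThm} to obtain a cylinder in a direction other than $\theta'$, chosen well inside the interior of $M$ so that it remains in $\Omega$, avoids the slits, and is finite.

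The main obstacle is the cylinder-filling step: making the winding/equidistribution argument rigorous in the presence of slits, so that it is the \emph{full} cylinder — not merely a single closed $\theta'$-leaf — that lands in $\Omega$, and in the minimal case producing a cylinder that genuinely sits in the interior of $M$, away from its $\theta'$-boundary, so that it stays inside $\Omega$ and disjoint from the slits.
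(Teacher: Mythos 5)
Your treatment of the first assertion is a genuinely different organization from the paper's, and it is essentially viable: you saturate $\Omega(\{s_n\})$ by $\theta'$-leaves and then fill in whole invariant components, whereas the paper passes to a subsequence of saddle connections sharing an endpoint $z_0$ and chases the $\theta'$-separatrix emanating from $z_0$, building a chain of saddle connections that either ends in a semi-infinite trajectory (whose closure is a union of invariant components) or closes up into the boundary of a cylinder that the $s_n$ fill densely. Two points you elide are real but fixable: you need $\Omega$ to contain a regular point in the \emph{interior} of some invariant component (not merely on the graph of $\theta'$-saddle connections), which follows, e.g., because a weak-$*$ limit of the normalized length measures on the $s_{n_k}$ is invariant under the $\theta'$-flow and cannot charge that graph; and when the $\theta'$-leaf through such a point hits a singularity, the limit must be continued along a separatrix on a definite side, which is exactly the paper's chain construction. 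Your stated worry about the winding step is the least of the problems: inside an open $\theta'$-cylinder the crossing arcs of $s_{n_k}$ are parallel straight segments whose transverse spacing tends to zero, and since they avoid the slits, their density simultaneously fills the cylinder and forces its interior to be slit-free.

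The genuine gap is the ``furthermore'' in the case where every component of $\Omega$ is minimal, and it is precisely the step you concede as ``the main obstacle.'' Doubling $M$ along its $\theta'$-boundary and invoking Theorem \ref{MasurThm} does not allow the resulting cylinder to be ``chosen well inside the interior of $M$'': Masur's theorem gives a dense set of cylinder \emph{directions} on the doubled closed surface but no control on where the cylinders sit, and in general, for all directions outside a thin set accumulating only at $\theta'$, \emph{every} cylinder of the double crosses the seam. For instance, if $M$ is the square torus cut along a segment of a leaf of irrational slope $\theta'$, then by the same reasoning as in Lemma \ref{TorusCylAccSlitDir} a cylinder disjoint from the seam can only exist in directions accumulating to $\theta'$; in any direction bounded away from $\theta'$ no seam-avoiding cylinder exists at all. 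So the existence of a cylinder interior to $M$ is not a matter of selecting a good direction from Masur's dense set --- it is exactly the nontrivial assertion requiring proof. The paper closes this hole by citing Cor.~4.4 of \cite{AulicinoCompDegKZ}, which produces a finite cylinder, disjoint from the boundary, inside a translation surface whose boundary is a union of parallel saddle connections. Without that result or an independent proof of it, your proposal does not establish the second assertion of Lemma \ref{InfSCImpFinCyl}.
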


We postpone the proof of Lemma \ref{InfSCImpFinCyl} to Section \ref{PfGenGMNLemSect}.  See Figure \ref{Rank2ExFig} (left) for an example of a slit torus satisfying the assumptions of the Proposition \ref{Tor2SlitsCBR2}.

\begin{proposition}
\label{Tor2SlitsCBR2}
The set $\Theta$ for a torus with two non-parallel slits and a finite cylinder disjoint from the slits has Cantor-Bendixson rank two.
\end{proposition}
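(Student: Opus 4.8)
The plan is to pin down the rank by computing two things: that the first derived set $S^{*}$ of $S:=\Theta(\hat X,\hat\omega)$ is nonempty (forcing rank at least two) and that the second derived set $S^{**}$ is empty (forcing rank at most two). Since $S$ is at most countable and so contains no nonempty perfect subset (Lemma \ref{SCAlwExistLem}), we have $S\neq S^{*}$ and $S^{*}\neq S^{**}$ whenever these are nonempty; hence establishing $S^{*}\neq\emptyset$ and $S^{**}=\emptyset$ makes $n=2$ the smallest index with $S^{*(n+1)}=S^{*n}$, i.e.\ the rank is exactly two. Write $s_1,s_2$ for the two slits, with distinct directions $\theta_1\neq\theta_2$, and let $\alpha$ be the core direction of the hypothesized finite cylinder $C$ disjoint from the slits.

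For the lower bound I would invoke Lemma \ref{FinCyl}: the cylinder $C$ is bounded by saddle connections in direction $\alpha$ through slit endpoints, so $\alpha\in S$, and the family of saddle connections making one additional turn around $C$ for each winding lies inside $C$ (hence avoids the slits) with directions converging to $\alpha$. Thus $\alpha$ is a non-isolated point of $S$, so $\alpha\in S^{*}$ and the rank is at least two.

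For the upper bound I first claim there are only finitely many finite cylinders disjoint from \emph{both} slits. Such a cylinder is in particular disjoint from $s_1$, so by Lemma \ref{TorusCylAccSlitDir} the set of possible directions can accumulate only at $\theta_1$; applying the lemma to $s_2$ it can accumulate only at $\theta_2$. As $\theta_1\neq\theta_2$, this direction set has no accumulation point, hence is finite, and in each such direction the single torus cylinder is chopped by the finitely many slit segments into finitely many subcylinders; list the resulting core directions as $\alpha_1,\dots,\alpha_m$. I then claim every limit point $\theta'$ of $S$ lies in the finite set $\{\theta_1,\theta_2,\alpha_1,\dots,\alpha_m\}$, which gives $S^{*}\subseteq\{\theta_1,\theta_2,\alpha_1,\dots,\alpha_m\}$, a finite set, so every point of $S^{*}$ is isolated in it and $S^{**}=\emptyset$. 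To prove the claim, fix a limit point $\theta'\notin\{\theta_1,\theta_2\}$ and pick saddle connections $s_n$ disjoint from the slits with distinct directions tending to $\theta'$. If $\theta'$ were irrational with respect to the lattice, then since neither slit is parallel to $\theta'$ each slit projects to positive length on the line perpendicular to $\theta'$, while the $s_n$ grow long and equidistribute, so for large $n$ the segment $s_n$ meets a fixed transversal densely enough to cross a slit — exactly the mechanism in the proof of Lemma \ref{TorusCylAccSlitDir} — contradicting disjointness; hence $\theta'$ is rational. Now Lemma \ref{InfSCImpFinCyl} applied to $\{s_n\}$ shows $\Omega(\{s_n\})$ is a finite union of invariant components in direction $\theta'$ containing a finite cylinder $C'$ disjoint from the slits. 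Because $\theta'$ is rational and not a slit direction, the slits cut every direction-$\theta'$ geodesic, so the components comprising $\Omega$ are proper subcylinders (strips of finite transverse width) in direction $\theta'$; a cylinder $C'$ with core in any other direction would have its core geodesic exit such a strip and cross a bounding slit, which is impossible as $C'$ avoids the slits. Therefore $C'$ lies in direction $\theta'$, so $\theta'\in\{\alpha_1,\dots,\alpha_m\}$, completing the claim.

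Combining the two bounds gives rank exactly two. The step I expect to be the main obstacle is this last one: neutralizing the caveat in Lemma \ref{InfSCImpFinCyl} that the extracted cylinder $C'$ need not a priori point in direction $\theta'$. On the torus this is resolved by exploiting that in a rational non-slit direction the invariant components are genuine slit-bounded strips, but making precise that ``a transversely-oriented cylinder cannot be embedded in such a strip without crossing a slit,'' together with the equidistribution estimate excluding irrational limit directions, is where the real care is needed.
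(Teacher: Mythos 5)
Your proposal is correct in outline and, at the skeleton level, follows the paper: the lower bound is the paper's verbatim (Lemma \ref{FinCyl} applied to the hypothesized cylinder), and your first claim --- that the directions of finite cylinders disjoint from both slits form a finite set, because Lemma \ref{TorusCylAccSlitDir} applied to each slit separately leaves no possible accumulation point --- is exactly the paper's first step in the upper bound. Where you genuinely differ is in how you treat sequences of saddle connections that are not winding inside a fixed cylinder, i.e.\ how you neutralize the caveat in Lemma \ref{InfSCImpFinCyl} that the extracted cylinder need not point in the limit direction $\theta'$. The paper sidesteps that caveat entirely: it splits the saddle connections into those contained in the finitely many slit-avoiding cylinders (whose directions accumulate only at the finitely many core-curve directions) and all others, and shows the residual collection is finite by applying Lemma \ref{InfSCImpFinCyl} to a hypothetical infinite residual sequence and contradicting the choice of that residual class. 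You instead pin down the derived set explicitly, $S^* \subseteq \{\theta_1,\theta_2,\alpha_1,\dots,\alpha_m\}$, via a rational/irrational dichotomy on the limit direction: irrational limits are excluded by a dynamical crossing argument, and for rational non-slit limits you argue that the invariant components of $\Omega(\{s_n\})$ are proper strips in direction $\theta'$, so any cylinder inside them is parallel to $\theta'$. Your route buys a concrete description of the derived set (limit directions are slit directions or cylinder directions), but it leans on torus-specific structure (rational directions give cylinder decompositions, irrational ones are minimal); the paper's contradiction argument is shorter and is the shape of argument that scales up to the dimension induction of Theorem \ref{CBRankBd}.

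Two steps in your version need repair, both fixable. First, the phrase ``the slits cut every direction-$\theta'$ geodesic'' cannot be meant literally: if every such geodesic met a slit, then $\Omega(\{s_n\})$ would be empty, contradicting Lemma \ref{InfSCImpFinCyl}. What you actually need, and what makes the strip argument work, is that a slit transverse to $\theta'$ shadows an interval of positive length in the transverse direction; hence each invariant component is a proper strip, and a closed geodesic in any direction other than $\theta'$ has monotone transverse coordinate, so it must pass through the shadowed interval and therefore cannot stay in $\Omega(\{s_n\})$. Second, in the irrational case, equidistribution of the segments $s_n$ themselves is delicate because the directions $\theta_n$ vary with $n$; the mechanism of Lemma \ref{TorusCylAccSlitDir} exploits that core curves of cylinders meet a transversal in equally spaced points, which is not available for saddle connections. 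A softer argument is compactness: the linear flow in the irrational direction $\theta'$ is minimal, so there is a $T$ such that every orbit segment of length $T$ in direction $\theta'$ crosses the open interior of a slit; since crossing an open transverse segment is an open condition in the pair (starting point, direction), the same $T$ works uniformly for all directions sufficiently close to $\theta'$, and then $L_n \to \infty$ together with $\theta_n \to \theta'$ forces $s_n$ to cross a slit --- the desired contradiction, with no quantitative equidistribution needed.
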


\begin{proof}
Since the slit torus contains a finite cylinder disjoint from the cylinders, $\Theta$ contains infinitely many elements by Lemma \ref{FinCyl}, thus the Cantor-Bendixson rank is at least two.

By Lemma \ref{TorusCylAccSlitDir}, the accumulation points of $\Theta$ corresponding to closed curves of cylinders can only lie in the direction of a slit.  However, if the slits do not lie in the same direction, then the subset of $\Theta$ corresponding to closed curves of cylinders must be finite because neither direction of a slit can be an accumulation point of cylinder directions due to the presence of the other slit and Lemma \ref{TorusCylAccSlitDir}.

By Lemma \ref{InfSCImpFinCyl}, every infinite sequence of saddle connections implies the existence of a finite cylinder in which a subset of the infinite sequence is dense.  Since there are only finitely many finite cylinders, consider the collection of all saddle connections contained in each of the finite cylinders, which have finitely many accumulation points corresponding to the core curves of the finite cylinders.  We claim there cannot be an infinite sequence of saddle connections in the complement of these saddle connections.  By contradiction, if there were, then they would accumulate to a finite cylinder $C$ that contains an infinite sequence of them by Lemma \ref{InfSCImpFinCyl}.  However, the saddle connections were chosen to be outside of a finite collection of cylinders on the surface, which yields a contradiction.  Hence, $\Theta$ has Cantor-Bendixson rank at most two if there are two slits lying in distinct directions.
\end{proof}

\subsection{Cantor-Bendixson Rank $3$}

\begin{lemma}
\label{TorCBRMin3}
The Cantor-Bendixson rank of the set of saddle connections on the torus with one slit with distinct end points is at least three.
\end{lemma}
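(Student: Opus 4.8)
The plan is to exhibit a single direction lying in the second derived set $\Theta^{*2}$, where $\Theta:=\Theta(\hat X,\hat\omega)$; by the bookkeeping below this forces Cantor--Bendixson rank at least three. Recall that $\Theta$ is countable (Lemma~\ref{SCAlwExistLem}) and, for a slit torus, closed (the set of saddle connection directions disjoint from the slit is closed by the methods behind Theorem~\ref{DoubPoleClosed}). Hence each derived set $\Theta^{*k}$ is closed and countable, and a nonempty closed countable subset of $\bR$ always has an isolated point, since otherwise it would be a nonempty perfect set and thus uncountable. Therefore $\Theta^{*k}\neq\emptyset$ implies $\Theta^{*(k+1)}\subsetneq\Theta^{*k}$, so producing one point of $\Theta^{*2}$ yields the strictly decreasing chain $\Theta\supsetneq\Theta^{*}\supsetneq\Theta^{*2}\supsetneq\Theta^{*3}$ and hence rank at least three. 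The natural candidate for this point is the slit direction $\theta_s$; indeed, by Lemma~\ref{TorusCylAccSlitDir} it is the only direction at which cylinder directions can accumulate, so it is the only possible ``doubly accumulating'' direction.

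To place $\theta_s$ in $\Theta^{*2}$ I will construct infinitely many finite cylinders disjoint from the slit whose core directions are distinct and converge to $\theta_s$. Since a lower bound only requires one example, I normalize the torus to $\bR^2/\bZ^2$ and take the slit $s$ to be a horizontal segment of length $L<1$ with distinct endpoints $A\neq B$, so $\theta_s$ is horizontal. For each large $q$ consider the direction $\theta_q$ of slope $1/q$: the torus is then a single cylinder of core length $\ell_q=\sqrt{q^2+1}$ and transverse circumference $h_q=1/\ell_q$, and the transverse shadow of $s$ is a single arc of length $L/\ell_q$. Because $L<1$, this arc is strictly shorter than $h_q$, so its complement is a nonempty arc whose leaves sweep out a genuine finite cylinder $C_q$ disjoint from $s$, bounded by the leaves through $A$ and $B$. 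The $\theta_q$ are distinct and $\theta_q\to\theta_s$.

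Each $\theta_q$ lies in $\Theta^{*}$: the two boundary leaves of $C_q$ pass through the marked points $A$ and $B$ (endpoints of $s$, which lie in $\Sigma$ by Convention~\ref{SlitTransConv}), so as in the proof of Lemma~\ref{FinCyl} there are infinitely many saddle connections winding around $C_q$, all disjoint from $s$, whose directions converge to $\theta_q$. Thus $\theta_q$ is a non-isolated point of $\Theta$, i.e. $\theta_q\in\Theta^{*}$. Since $s$ is itself a saddle connection in $\Theta$ and $\theta_q\to\theta_s$, the direction $\theta_s$ lies in $\Theta^{*}$ and is a limit of the distinct points $\theta_q\in\Theta^{*}$; hence $\theta_s\in(\Theta^{*})^{*}=\Theta^{*2}$, which is what the reduction requires.

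The main obstacle is the geometric heart of the second paragraph: verifying that the surviving leaves of direction $\theta_q$ form an honest finite cylinder disjoint from the slit, with $A$ and $B$ on its two boundary components. This amounts to controlling the transverse shadow of $s$, and it is exactly here that the hypothesis of \emph{distinct} endpoints enters: a segment with distinct endpoints has finite length, so its shadow $L/\ell_q$ shrinks relative to $h_q$ as $\theta_q\to\theta_s$, whereas a slit whose endpoints coincide wraps once around a full period and shadows the entire transverse circle in every nearby direction, blocking all such cylinders --- this is precisely what keeps that case at rank two. For a slit in a general (possibly irrational) direction one replaces the slopes $1/q$ by the convergents $p_n/q_n$ of $\theta_s$ and checks that $L\,\ell_n\,\lvert\sin(\theta_s-\theta_n)\rvert\to 0$, so that the same cylinders persist; I would record the horizontal case in full and remark that this Diophantine estimate covers the general configuration. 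A secondary point to pin down is the closedness of $\Theta$ used in the first paragraph, which for slit translation surfaces follows from the same analysis as Theorem~\ref{DoubPoleClosed}.
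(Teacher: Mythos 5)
Your proposal is correct and follows essentially the same route as the paper's own proof: exhibit cylinders disjoint from the slit in directions of slope $1/q$ accumulating to the slit direction, each of which (as in Lemma~\ref{FinCyl}) spawns infinitely many saddle connection directions accumulating to its core direction, and handle non-horizontal slit directions by continued-fraction convergents together with the Dirichlet estimate on $L^2\sin^2(\theta_n)(p_n^2+q_n^2)$ --- exactly the paper's rational/irrational dichotomy, with your first paragraph merely making explicit the derived-set bookkeeping the paper delegates to Lemma~\ref{SCAlwExistLem} and Corollary~\ref{SlitTransClsdSCDirs}. One caution on phrasing: the lemma is a statement about \emph{every} slit torus with distinct endpoints (it is quoted as such in Proposition~\ref{TorCBR3Prop}), so the reduction to a horizontal slit must be justified by an $\mathrm{SL}_2(\mathbb{Z})$ affine equivalence as in the paper rather than by ``a lower bound only requires one example''; since your closing Diophantine remark covers general slit directions, this is a defect of justification, not of substance.
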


\begin{proof}
Without loss of generality, we regard the torus as the unit square in the plane.  The slit has either rational or irrational slope.

\

\noindent \textit{Rational Case:} Consider a set of directions on the torus converging to the direction in which the slit lies.  Let the slit lie in the rational direction with slope $p/q$.  After applying a linear transformation and cutting and regluing the torus, it suffices to consider the case of the torus as a unit square in the plane with a slit of length less than one lying along the $x$-axis without loss of generality.  Then in the direction of slope $1/n$, it is clear that there is one cylinder, which is bounded by the endpoints of the slit, and any closed trajectory in the complement of the cylinder must pass through the slit.  By Lemma \ref{FinCyl}, each cylinder gives rise to a distinct infinite sequence of saddle connection directions converging to the direction of its core curve, and the directions of the core curves of the cylinders themselves converge to the horizontal direction.  Hence, the Cantor-Bendixson rank is at least three if the slit has rational slope.

\

\noindent \textit{Irrational Case:} Let the slit have slope $\alpha \in \bR \setminus \bQ$.  It suffices to consider $\alpha < 1$.  Let $p_n/q_n$ be the sequence of continued fraction approximates converging to $\alpha$.  Let $L$ be the length of the slit.  Choose $N$ sufficiently large so that $\sqrt{p_n^2+q_n^2} > L$ for all $n \geq N$.  Let $\theta_n$ be the angle between the line of slope $p_n/q_n$ and $\alpha$.  Then the length of the projection of the slits onto the orthogonal line of slope $-q_n/p_n$ is given by $L \sin \theta_n$.  The torus in direction $p_n/q_n$ can be realized as a rectangle with sides of lengths $\sqrt{p_n^2+q_n^2}$ and $1/\sqrt{p_n^2+q_n^2}$.  In order to see the existence of a cylinder disjoint from the slits it suffices to show that the projection of the slits onto the short side of the rectangle is less than the length of the short side of the rectangle, i.e.
$$L\sin \theta_n < \frac{1}{\sqrt{p_n^2+q_n^2}},$$
or equivalently $L^2 \sin^2(\theta_n) (p_n^2 + q_n^2) < 1$.  By Dirichlet's Approximation Theorem, there are infinitely many $n$ satisfying the inequality
$$\left| \alpha - \frac{p_n}{q_n}\right| < \frac{1}{q_n^2}.$$
Let $\tilde \theta_n$ be the angle of the line with slope $p_n/q_n$ so that $\tan \tilde \theta_n = p_n/q_n$.  Let $\tan \theta' = \alpha$.  Using $\tan \theta_n = \tan (\theta' - \tilde \theta_n)$ yields
$$\sin \theta_n = \frac{\alpha - \frac{p_n}{q_n}}{\sqrt{\left(\alpha - \frac{p_n}{q_n}\right)^2 + \left(1 + \alpha\frac{p_n}{q_n}\right)^2}}.$$
Then this implies
$$L^2 \sin^2(\theta_n) (p_n^2 + q_n^2) = \frac{L^2(p_n^2 + q_n^2)\left(\alpha - \frac{p_n}{q_n}\right)^2}{\left(\alpha - \frac{p_n}{q_n}\right)^2 + \left(1 + \alpha\frac{p_n}{q_n}\right)^2}$$
$$< L^2(p_n^2 + q_n^2)\frac{1}{q_n^4} = \frac{L^2}{q_n^2}\left(\frac{p_n^2}{q_n^2} + 1\right) < \frac{2L^2}{q_n^2},$$
which clearly converges to zero as $n$ tends to infinity.
\end{proof}

\begin{proposition}
\label{TorCBR3Prop}
The Cantor-Bendixson rank of the set of saddle connections on the torus with one slit with distinct endpoints is three.
\end{proposition}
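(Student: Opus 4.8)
The plan is to establish the matching upper bound that the rank is at most $3$, since Lemma~\ref{TorCBRMin3} already gives that it is at least $3$. Writing $\theta_s$ for the unique direction in which the slit lies and $\Theta = \Theta(\hat X,\hat\omega)$, I would prove the single containment $\Theta^{*2} \subseteq \{\theta_s\}$. Because a one-point set has empty derived set, this immediately yields $\Theta^{*3} = \emptyset$, hence rank at most $3$; combined with Lemma~\ref{TorCBRMin3}, whose proof exhibits $\theta_s$ as a limit of cylinder core directions and hence places $\theta_s \in \Theta^{*2}$, this gives rank exactly $3$.

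To prove $\Theta^{*2} \subseteq \{\theta_s\}$ it suffices to show that $\Theta^*$ has no accumulation point other than $\theta_s$; equivalently, for every closed arc $J \subseteq [0,\pi)$ with $\theta_s \notin J$, the set $\Theta^* \cap J$ is finite. Fix such a $J$ and a slightly larger closed arc $J'$ with $J \subseteq \mathrm{int}(J')$ and still $\theta_s \notin J'$. By Lemma~\ref{TorusCylAccSlitDir} the core directions of cylinders disjoint from the slit accumulate only at $\theta_s$, so only finitely many lie in $J'$; since in each fixed direction a torus decomposes into finitely many cylinders and saddle connection regions, there are only finitely many finite cylinders $C_1,\dots,C_m$ disjoint from the slit whose core direction lies in $J'$. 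This is the replacement for the ``finitely many cylinders'' hypothesis used in Proposition~\ref{Tor2SlitsCBR2}: although the slit torus carries infinitely many finite cylinders, only finitely many are relevant once we localize away from $\theta_s$.

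Next I would run the complement argument of Proposition~\ref{Tor2SlitsCBR2} inside $J$. The saddle connections winding around $C_1,\dots,C_m$ contribute, by Lemma~\ref{FinCyl}, only the finitely many accumulation directions equal to the core directions of the $C_i$. For the remaining saddle connections whose directions lie in $J$, suppose for contradiction that infinitely many of them accumulate at some $\theta' \in J$. Applying Lemma~\ref{InfSCImpFinCyl} to this subsequence, $\Omega(\{s_n\})$ is a finite union of parallel invariant components in direction $\theta'$ and contains a finite cylinder $C$ disjoint from the slit in which a subsequence of the $s_n$ is dense. I would then argue (this is the crux) that $C$ must be one of $C_1,\dots,C_m$: since the $s_n$ are dense in the annulus $C$ while their directions tend to $\theta' \in J$, the core direction of $C$ is pinned near $\theta'$ and so lies in $J'$. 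This contradicts the choice of these saddle connections as lying outside every $C_i$, and shows $\Theta^* \cap J$ is finite.

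The main obstacle is precisely this pinning step, and it is where the genus-one hypothesis does the work. The difficulty is the caveat in Lemma~\ref{InfSCImpFinCyl} that the finite cylinder $C$ need not lie in direction $\theta'$: a priori the $s_n$ could fill $C$ transversally rather than by winding, and when $\theta'$ is irrational the direction-$\theta'$ invariant component of the torus is the whole surface, so $\Omega$ by itself carries no information pinning $C$. I would resolve this using that the $s_n$ avoid the open slit while becoming dense in $C$: an annulus cannot be filled densely by trajectories of a single nearby direction $\theta'$ without those trajectories closing up around its core, which forces $\theta'$ to equal the core direction of $C$ and hence to be a cylinder direction in $J'$. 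Verifying this dichotomy on the torus---that accumulation of slit-avoiding saddle connections at a direction $\theta' \neq \theta_s$ can occur only along an actual disjoint cylinder whose core direction is near $\theta'$---is the technical heart of the argument, and everything else is bookkeeping with the derived sets.
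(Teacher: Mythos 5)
Your overall skeleton (lower bound from Lemma~\ref{TorCBRMin3}; upper bound by forcing $\Theta^{*2} \subseteq \{\theta_s\}$ via Lemmas~\ref{TorusCylAccSlitDir} and~\ref{InfSCImpFinCyl}) is the same as the paper's, but the step you yourself flag as the crux---pinning the limiting direction $\theta'$ to a cylinder core direction---is not established by the mechanism you propose, and this is a genuine gap. Density of the $s_n$ in an annulus $C$ does \emph{not} force them to ``close up around its core'': arbitrarily long saddle connections whose directions tend to $\theta'$ can cross a cylinder whose core lies in a quite different direction transversally, in many nearly parallel segments, and these crossings can perfectly well be dense in $C$. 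So ``dense filling implies winding, hence the core direction of $C$ equals $\theta'$'' is false as a general principle, and nothing else in your sketch substitutes for it. Moreover, the caveat in Lemma~\ref{InfSCImpFinCyl} that $C$ need not lie in direction $\theta'$ arises precisely in the case your heuristic cannot rule out, namely when $\Omega(\{s_n\})$ is a union of minimal components and $C$ comes from inside it in another direction.

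The repair---which is exactly the paper's proof---is to use the part of Lemma~\ref{InfSCImpFinCyl} you did not exploit: $\Omega(\{s_n\})$ is itself a union of invariant components \emph{in direction $\theta'$} whose interiors are disjoint from the slit; the direction caveat concerns only the auxiliary cylinder $C \subset \Omega$, not $\Omega$ itself. On the torus with one slit, every saddle connection joins the slit's endpoints (those from an endpoint to itself are closed curves, hence cylinder cores, already handled by Lemma~\ref{TorusCylAccSlitDir}), and the dichotomy for $\Omega$ is stark: a slit-avoiding invariant component in direction $\theta'$ is either a cylinder in direction $\theta'$---so $\theta'$ \emph{is} the core direction of a cylinder disjoint from the slit, and Lemma~\ref{TorusCylAccSlitDir} applies to it---or a minimal component, whose leaves are dense in the torus and hence must cross any segment not parallel to them; since they avoid the slit, this forces $\theta' = \theta_s$. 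Your remark that for irrational $\theta'$ ``$\Omega$ by itself carries no information pinning $C$'' has it backwards: precisely because the direction-$\theta'$ minimal set of the unslit torus is the whole surface, no slit-avoiding invariant component exists in an irrational direction $\theta' \neq \theta_s$, so accumulation at such $\theta'$ simply cannot occur. With this observation your localization to arcs $J \subset J'$ and the complement argument borrowed from Proposition~\ref{Tor2SlitsCBR2} become unnecessary: one gets directly that $\Theta^*$ is contained in $\{\theta_s\}$ together with the core directions of cylinders disjoint from the slit, and Lemma~\ref{TorusCylAccSlitDir} then gives $\Theta^{*2} \subseteq \{\theta_s\}$ and $\Theta^{*3} = \emptyset$, i.e.\ rank at most three.
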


\begin{proof}
By Lemma \ref{TorCBRMin3}, it suffices to prove that the Cantor-Bendixson rank is bounded above by three.  By the definition of a saddle connection, every saddle connection on the torus with one slit must begin and terminate at the end points of the slit.  If a saddle connection emanates and terminates from the same endpoint of the slit, it determines a closed trajectory, whence it determines a cylinder and these saddle connections can only accumulate in the direction of the slit by Lemma \ref{TorusCylAccSlitDir}.  If a sequence of saddle connections $\{s_n\}$ connects distinct endpoints of the slit, then $\Omega(\{s_n\})$ is an invariant component of the torus by Lemma \ref{InfSCImpFinCyl}.  Hence, it can only be a minimal component in the direction of the slit or a cylinder disjoint from the slit.  Again the cylinder directions can only accumulate to the direction of the slit by Lemma \ref{TorusCylAccSlitDir}.  Therefore, sequences of saddle connections can accumulate to the direction of the slit or a finite cylinder, and those finite cylinders can only accumulate to the direction of the slit.  Thus, the Cantor-Bendixson rank must be exactly three in this case.
\end{proof}

\section{Proof of the Generalized GMN Lemma}
\label{PfGenGMNLemSect}

In this section we prove Theorem \ref{DoubPoleClosed}.  We begin with the proof of Lemma \ref{InfSCImpFinCyl}.

\begin{proof}[Proof of Lemma \ref{InfSCImpFinCyl}]
Without loss of generality, we pass to the following subsequence of $\{s_n\}$.  Since saddle connections connect pairs of singularities, and there are finitely many singularities, it suffices to pass to a subsequence such that $s_n$ connects the same two (not necessarily distinct) singularities for all $n$.  Secondly, it suffices to pass to another subsequence so that the corresponding sequence of angles $\{\theta_n\}$ converges monotonically to a fixed angle $\theta'$.  It is clear that there are at most finitely many saddle connections of length at most $L$ on a flat surface.  Hence, the flat lengths of the saddle connections in the sequence $\{s_n\}$ tend to infinity with $n$.  Let $\Omega$ denote the set defined with respect to this subsequence.

Let $z_0$ denote one of the singularities from which all of the saddle connections $s_n$ emanate.  Since the sequence of saddle connections is converging monotonically in angle to the limiting direction $\theta'$, the trajectory in direction $\theta'$ emanating from $z_0$ is contained in $\Omega$.  If this trajectory is semi-infinite, then its closure is a union of invariant components (See \cite[$\S$3.4]{VianaIETBook}).  Otherwise, it terminates at a zero $z_1$, forming a saddle connection $\sigma_1$, which is contained in $\Omega$ by definition.  Consider the trajectory emanating from $z_1$ in direction $\theta'$, which must be contained in $\Omega$.  Again it is either semi-infinite, and we conclude as before, or it terminates at a zero $z_2$, forming a saddle connection $\sigma_2$, and we continue this procedure.  Since there are only finitely many saddle connections in a given direction, this process must terminate in either a semi-infinite trajectory or at the copy of the zero $z_0$ incident with the saddle connection $\sigma_1$.  In the latter case, we see that the union of all of the saddle connections form the boundary of a cylinder $C$, by construction.  Since the saddle connections $\{s_n\}$ become dense in $C$ as $n$ tends to infinity, we get that $C \subseteq \Omega$.

If $\Omega$ is a union of invariant components in direction $\theta'$ such that none of them are cylinders, then after passing to a connected component (which is in fact unnecessary because it can be easily proven that $\Omega$ is connected), $\Omega$ is a Riemann surface with boundary.  Furthermore, the boundary curves lie in the same foliation on $(\hat X, \hat \omega)$ corresponding exactly to $\theta'$, and by construction the interiors of the invariant components are disjoint from the slits.  The existence of the finite cylinder $C \subseteq \Omega$ disjoint from the slits follows from \cite[Cor. 4.4]{AulicinoCompDegKZ}.
\end{proof}

\begin{proof}[Proof of Theorem \ref{DoubPoleClosed}]
Let $(X, q)$ be the Riemann surface carrying the meromorphic differential.  Consider its canonical double cover $(\tilde X, \omega)$ and recall that $\Theta(X, q) = \Theta(\tilde X, \omega)$ by Lemma \ref{CBRankCov}.  Perform the surgery on $(\tilde X, \omega)$ to get a slit translation surface $(\hat X, \hat \omega)$.  Next, we claim that $\Theta(\tilde X, \omega) = \Theta(\hat X, \hat \omega)$.  By Convention \ref{MeromorConv}, endpoints of slits arising from the surgery of a meromorphic quadratic (or Abelian) differential are not included in the set of marked points, we have that the set of marked points of $(\tilde X, \omega)$ is the same as the set of marked points on $(\hat X, \hat \omega)$.  Since the surfaces $(\tilde X, \omega)$ and $(\hat X, \hat \omega)$ agree up to neighborhoods of the poles, which do not intersect any saddle connection by construction, the claim follows.

We consider several cases.  If there are only finitely many saddle connections, then the set of saddle connection directions is finite, thus it is closed. If there are infinitely many saddle connections, then after passing to a sufficient subsequence of saddle connections, they must accumulate in a union of invariant components $\Omega$ by Lemma \ref{InfSCImpFinCyl}.  If $\Omega$ consists of two or more invariant components, then the boundary between them contains a saddle connection and we are done.  If $\Omega$ is a cylinder, then its boundary contains saddle connections, and again we are done.  If $\Omega$ is a minimal component that is a proper subset of the surface, then it has a boundary consisting of a union of saddle connections.  

Finally, we consider the case where $\Omega$ is a minimal component that is equal to the entire surface.  The first observation is that if $q$ has a pole of order greater than two, then there are slits in two distinct directions by definition of the partial pole surgery, thus $\Omega$ must be a proper subset of the surface.  Hence, if $\Omega$ is equal to $(\hat X, \hat \omega)$ up to a set of measure zero, then every pole of $q$ has order at most two.  Moreover, each slit lies exactly in the direction of the closed trajectories of the half-infinite cylinder that was removed by definition of the surgery.  The boundary of the half-infinite cylinder on $q$ which is now exactly the direction of $\Omega$ consists of a union of saddle connections.  Thus, the limit of every convergent sequence of saddle connection directions is a direction in which the foliation on the surface admits a saddle connection.
\end{proof}

From the definition of $\Theta(\hat X, \hat \omega)$ for a slit translation surface and the proof of Theorem \ref{DoubPoleClosed}, the following corollary is immediate.

\begin{corollary}
\label{SlitTransClsdSCDirs}
If $(\hat X, \hat \omega)$ is a slit translation surface (possibly with boundary), then $\Theta(\hat X, \hat \omega)$ is closed.
\end{corollary}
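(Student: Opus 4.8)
The plan is to transplant the architecture of the proof of Theorem \ref{DoubPoleClosed} directly onto $(\hat X, \hat\omega)$, bypassing the canonical double cover and the surgery, since Lemma \ref{InfSCImpFinCyl} is already stated for an arbitrary slit translation surface. To show that $\Theta(\hat X, \hat\omega)$ is closed I would take a sequence $\{\theta_n\} \subset \Theta(\hat X, \hat\omega)$ with $\theta_n \to \theta'$ and prove $\theta' \in \Theta(\hat X, \hat\omega)$. After discarding the terms that are slit directions (these form a finite set, so such a subsequence is eventually constant and converges to a slit direction, which lies in $\Theta(\hat X, \hat\omega)$ by convention), each remaining $\theta_n$ is realized by a saddle connection $s_n$ whose interior is disjoint from every slit. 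If only finitely many such saddle connections exist, then $\Theta(\hat X, \hat\omega)$ is finite and hence closed, so I may assume there are infinitely many.

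In the infinite case I would pass to a subsequence exactly as in the proof of Lemma \ref{InfSCImpFinCyl}: one joining a fixed pair of singularities, with flat lengths tending to infinity and with directions converging monotonically to $\theta'$. Applying Lemma \ref{InfSCImpFinCyl} identifies $\Omega = \Omega(\{s_n\})$ as a finite union of parallel invariant components in direction $\theta'$, and I would then split into cases according to the structure of $\Omega$, mirroring the theorem. If $\Omega$ has at least two invariant components, the separating curve between two of them is a saddle connection in direction $\theta'$; if $\Omega$ is a finite cylinder, its boundary is a union of saddle connections in direction $\theta'$; and if $\Omega$ is a minimal component that is a proper subset of the surface, its boundary is again a union of saddle connections in direction $\theta'$. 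In each of these cases the resulting saddle connection lies on the boundary of $\Omega$, whose interior is disjoint from the slits, so it is a slit-avoiding saddle connection and $\theta' \in \Theta(\hat X, \hat\omega)$.

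The remaining case, where $\Omega$ is a single minimal component equal to the entire surface, is the genuine obstacle, because a minimal direction carries no saddle connection and $\Omega$ has no boundary to supply one. Here I would instead show that $\theta'$ must be a slit direction and invoke the convention that slit directions always belong to $\Theta(\hat X, \hat\omega)$. Suppose some slit $s$ were not parallel to $\theta'$. Then the interior of $s$ is a transversal of positive length to the minimal foliation $\cF_{\theta'}$. Since $\Omega$ equals the whole surface, the tails $\bigcup_{k \geq n} s_k$ accumulate at every point; choosing a thin flow box for $\cF_{\theta'}$ that $s$ traverses completely and using $\theta_n \to \theta'$ together with the flat lengths of $s_n$ tending to infinity, a sufficiently late $s_k$ would be forced to cross the interior of $s$, exactly in the spirit of the obstruction argument of Lemma \ref{TorusCylAccSlitDir}. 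This contradicts the disjointness of $s_k$ from the slit interiors, so every slit is parallel to $\theta'$, whence $\theta'$ is a slit direction and lies in $\Theta(\hat X, \hat\omega)$. Making this transversality-versus-density step fully rigorous---ensuring that a late saddle connection genuinely crosses, rather than merely approaches, the transverse slit---is the step I expect to require the most care.
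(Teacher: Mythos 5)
Your proposal is correct and follows essentially the same route as the paper: the paper declares this corollary immediate from the definition of $\Theta(\hat X, \hat\omega)$ together with the proof of Theorem \ref{DoubPoleClosed}, and your argument is precisely that proof transplanted to the slit surface, with the identical case analysis via Lemma \ref{InfSCImpFinCyl}. Your flow-box argument in the whole-surface minimal case is simply the rigorous content behind the paper's appeal to the convention that slit directions always belong to $\Theta(\hat X, \hat\omega)$, which is what the paper means by ``from the definition.''
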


\section{Examples}
\label{SnExSect}

In this section we present an infinite family of slit translation surfaces realizing arbitrarily large Cantor-Bendixson rank as the genus tends to infinity.  The family is depicted in Figure \ref{FamCBRkn}. In each figure the saddle connection $a$ in bold represents a slit, and it is identified to the other copy of $a$.  Denote the $n$'th member of the family by $\hat S_n$.  Then $\hat S_n$ consists of $n+1$ squares and lies in a stratum of complex dimension $n+2$.  The reader can check that $\hat S_n \in \cH(n)$ if $n$ is even, and $\hat S_n \in \cH(\frac{n-1}{2}, \frac{n-1}{2})$ if $n$ is odd.

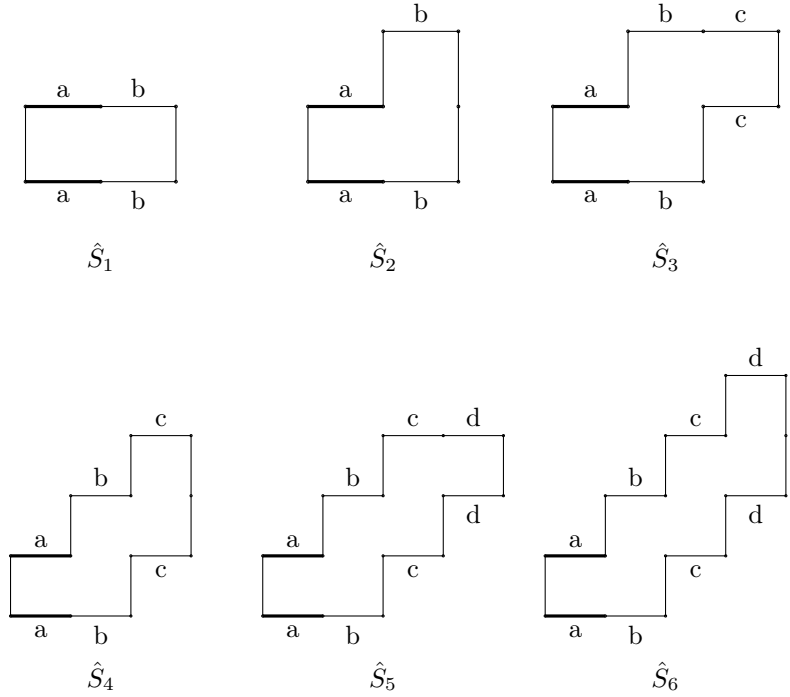
\begin{figure}[h!]
\begin{minipage}[t]{0.3\linewidth}
\centering
\begin{tikzpicture}[scale=0.50]
\draw (0,0)--(0,2)--(4,2)--(4,0)--cycle;
\draw [very thick] (0,0)--(2,0);
\draw [very thick] (0,2)--(2,2);
\foreach \x in {(0,0),(0,2),(2,2),(4,2),(4,0),(2,0)} \draw \x circle (1pt);
\draw(1,2) node[above] {a};
\draw(3,2) node[above] {b};
\draw(1,0) node[below] {a};
\draw(3,0) node[below] {b};
\draw(2,-2) node {$\hat S_1$};
\end{tikzpicture}
\end{minipage}
\begin{minipage}[t]{0.3\linewidth}
\centering
\begin{tikzpicture}[scale=0.50]
\draw (0,0)--(0,2)--(2,2)--(2,4)--(4,4)--(4,2)--(4,0)--cycle;
\draw [very thick] (0,0)--(2,0);
\draw [very thick] (0,2)--(2,2);
\foreach \x in {(0,0),(0,2),(2,2),(2,4),(4,4),(4,2),(4,0),(2,0)} \draw \x circle (1pt);
\draw(1,2) node[above] {a};
\draw(3,4) node[above] {b};
\draw(1,0) node[below] {a};
\draw(3,0) node[below] {b};
\draw(2,-2) node {$\hat S_2$};
\end{tikzpicture}
\end{minipage}
\begin{minipage}[b]{0.3\linewidth}
\centering
\begin{tikzpicture}[scale=0.50]
\draw (0,0)--(0,2)--(2,2)--(2,4)--(4,4)--(6,4)--(6,2)--(4,2)--(4,0)--cycle;
\draw [very thick] (0,0)--(2,0);
\draw [very thick] (0,2)--(2,2);
\foreach \x in {(0,0),(0,2),(2,2),(2,4),(4,4),(6,4),(6,2),(4,2),(4,0),(2,0)} \draw \x circle (1pt);
\draw(1,2) node[above] {a};
\draw(3,4) node[above] {b};
\draw(5,4) node[above] {c};
\draw(1,0) node[below] {a};
\draw(3,0) node[below] {b};
\draw(5,2) node[below] {c};
\draw(3,-2) node {$\hat S_3$};
\end{tikzpicture}
\end{minipage}

\vspace{0.75cm}

\begin{minipage}[t]{0.3\linewidth}
\centering
\begin{tikzpicture}[scale=0.40]
\draw (0,0)--(0,2)--(2,2)--(2,4)--(4,4)--(4,6)--(6,6)--(6,4)--(6,2)--(4,2)--(4,0)--cycle;
\draw [very thick] (0,0)--(2,0);
\draw [very thick] (0,2)--(2,2);
\foreach \x in {(0,0),(0,2),(2,2),(2,4),(4,4),(4,6),(6,6),(6,4),(6,2),(4,2),(4,0),(2,0)} \draw \x circle (1pt);
\draw(1,2) node[above] {a};
\draw(3,4) node[above] {b};
\draw(5,6) node[above] {c};
\draw(1,0) node[below] {a};
\draw(3,0) node[below] {b};
\draw(5,2) node[below] {c};
\draw(3,-2) node {$\hat S_4$};
\end{tikzpicture}
\end{minipage}
\begin{minipage}[b]{0.3\linewidth}
\centering
\begin{tikzpicture}[scale=0.40]
\draw (0,0)--(0,2)--(2,2)--(2,4)--(4,4)--(4,6)--(6,6)--(8,6)--(8,4)--(6,4)--(6,2)--(4,2)--(4,0)--cycle;
\draw [very thick] (0,0)--(2,0);
\draw [very thick] (0,2)--(2,2);
\foreach \x in {(0,0),(0,2),(2,2),(2,4),(4,4),(4,6),(6,6),(8,6),(8,4),(6,4),(6,2),(4,2),(4,0),(2,0)} \draw \x circle (1pt);
\draw(1,2) node[above] {a};
\draw(3,4) node[above] {b};
\draw(5,6) node[above] {c};
\draw(7,6) node[above] {d};
\draw(1,0) node[below] {a};
\draw(3,0) node[below] {b};
\draw(5,2) node[below] {c};
\draw(7,4) node[below] {d};
\draw(4,-2) node {$\hat S_5$};
\end{tikzpicture}
\end{minipage}
\begin{minipage}[b]{0.3\linewidth}
\centering
\begin{tikzpicture}[scale=0.40]
\draw (0,0)--(0,2)--(2,2)--(2,4)--(4,4)--(4,6)--(6,6)--(6,8)--(8,8)--(8,4)--(6,4)--(6,2)--(4,2)--(4,0)--cycle;
\draw [very thick] (0,0)--(2,0);
\draw [very thick] (0,2)--(2,2);
\foreach \x in {(0,0),(0,2),(2,2),(2,4),(4,4),(4,6),(6,6),(6,8),(8,8),(8,6),(8,4),(6,4),(6,2),(4,2),(4,0),(2,0)} \draw \x circle (1pt);
\draw(1,2) node[above] {a};
\draw(3,4) node[above] {b};
\draw(5,6) node[above] {c};
\draw(7,8) node[above] {d};
\draw(1,0) node[below] {a};
\draw(3,0) node[below] {b};
\draw(5,2) node[below] {c};
\draw(7,4) node[below] {d};
\draw(4,-2) node {$\hat S_6$};
\end{tikzpicture}
\end{minipage}
\caption{The Family of Surfaces $\hat S_n$}
\label{FamCBRkn}
\end{figure}

\begin{proposition}
\label{CBRankFam}
The Cantor-Bendixson rank of $\Theta(\hat S_n)$ is at least $n+2$.
\end{proposition}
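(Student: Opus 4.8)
The plan is to prove the equivalent statement that the horizontal direction $0$ — the direction in which the slit $a$ lies — belongs to the $(n+1)$-st derived set $\Theta(\hat S_n)^{*(n+1)}$. Since $\Theta(\hat S_n)$ is at most countable (Lemma \ref{SCAlwExistLem}) and closed (Corollary \ref{SlitTransClsdSCDirs}), every nonempty derived set strictly loses an isolated point, so the Cantor-Bendixson rank equals the least $k$ with $\Theta(\hat S_n)^{*k} = \emptyset$. Hence rank $\geq n+2$ is exactly the assertion $\Theta(\hat S_n)^{*(n+1)} \neq \emptyset$, and the slit direction is the natural candidate to survive $n+1$ derivations. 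Because each $\Theta(\hat S_n)^{*k}$ is closed, to place $0$ in $\Theta(\hat S_n)^{*(n+1)}$ it suffices to exhibit a sequence $\theta_j \to 0$ with $\theta_j \in \Theta(\hat S_n)^{*n}$.

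I would argue by induction on $n$, exploiting the self-similarity of the staircase: $\hat S_n$ is obtained from $\hat S_{n-1}$ by gluing one additional square at the top of the staircase, and this extra square introduces one further scale of near-horizontal cylinders disjoint from the slit. The base case $n=1$ is precisely Proposition \ref{TorCBR3Prop}: $\hat S_1$ is a torus carrying a single slit with distinct endpoints, whose set of saddle connection directions has rank $3 = 1+2$, with the surviving point of the second derived set being exactly the (horizontal) slit direction. For the inductive step I would use the horizontal parabolic (shear) automorphism of $\hat S_n$, which exists because all horizontal cylinders have commensurable moduli and which fixes the horizontal direction and the slit pointwise. Composing suitable powers of this parabolic with the cut-and-paste that collapses the new top square, I would produce, for each $j$, an affine map $\Psi_j$ carrying the saddle-connection structure of $\hat S_{n-1}$ near \emph{its} slit direction into the structure of $\hat S_n$ near a direction $\theta_j$, with $\theta_j \searrow 0$ forced by the parabolic fixed point at the horizontal. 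Since $\Psi_j$ is a homeomorphism on the circle of directions it preserves derived-set depth, so the inductive hypothesis $0 \in \Theta(\hat S_{n-1})^{*n}$ transports to $\theta_j \in \Theta(\hat S_n)^{*n}$; as $\theta_j \to 0$ and $\Theta(\hat S_n)^{*n}$ is closed, this yields $0 \in \Theta(\hat S_n)^{*(n+1)}$.

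The transfer between consecutive levels of the tower is powered by Lemma \ref{FinCyl}, which supplies the base level: each cylinder disjoint from the slit spawns an infinite family of saddle connections accumulating to its core direction. Lemmas \ref{InfSCImpFinCyl} and \ref{TorusCylAccSlitDir} would be invoked to confirm that the cylinder directions at each scale can only accumulate in slit-parallel directions, which pins the top of the tower at $0$ rather than at some spurious limit. The main obstacle I anticipate is making the renormalization $\Psi_j$ precise: one must verify that the affine reduction genuinely lands on $\hat S_{n-1}$ (rather than some other slit surface), that it sends $\theta_j \to 0$ at the required rate, and — most delicately — that cylinders and saddle connections disjoint from the slit of $\hat S_{n-1}$ pull back to configurations disjoint from the slit of $\hat S_n$, so that no trajectory in the embedded copy secretly crosses the slit and collapses the nesting. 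Establishing this slit-avoidance \emph{uniformly across all scales $j$} is the crux of the argument, and is where the explicit staircase geometry of Figure \ref{FamCBRkn} — in particular the fact that winding past the new top square requires directions ever closer to horizontal — must be used quantitatively.
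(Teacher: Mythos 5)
Your high-level skeleton is the same as the paper's: the paper also argues by induction on $n$, with base case Proposition \ref{TorCBR3Prop}, and also gets the inductive step by exhibiting affinely distorted copies of $\hat S_{n-1}$ embedded in $\hat S_n$, disjoint from the slit $a$, whose slit directions accumulate at the horizontal. Your preliminary reductions are sound and in places stated more carefully than in the paper: since $\Theta(\hat S_n)$ is countable and closed (and all derived sets of a closed set are closed), rank $\geq n+2$ is indeed equivalent to $\Theta(\hat S_n)^{*(n+1)} \neq \emptyset$, and your observation that an affine embedding acts as a homeomorphism on the circle of directions, hence preserves derived-set depth, is exactly the transfer principle the paper uses implicitly; likewise the strengthened inductive statement (the surviving point of the deepest derived set is the slit direction itself) is the right way to guarantee that the deep points of the different copies are distinct and converge to $0$.

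The genuine gap is the step you yourself flag as the crux: you never construct the embedded copies, and the mechanism you propose would not produce them. ``The cut-and-paste that collapses the new top square'' goes the wrong way --- collapsing the top square maps $\hat S_n$ onto $\hat S_{n-1}$, it does not embed $\hat S_{n-1}$ into $\hat S_n$; and the obvious sub-staircase of $\hat S_n$ on the bottom $n$ squares contains the slit $a$, so it is not an admissible copy. The paper's construction is a direct decomposition, with no renormalization map: for each $k$, trajectories of suitably small slope starting on $a$ wrap $k$ times around the bottom horizontal cylinder and close up through the identification of the two copies of $a$; these closed trajectories sweep out a cylinder $C_k$ containing the slit $a$, the directions of the $C_k$ converge to horizontal, and --- because every horizontal cylinder of $\hat S_n$ consists of two squares except possibly the top one --- the complement of $C_k$ is a union of cylinders running from $b$ to itself, $c$ to itself, and so on, which, after marking the boundary it shares with $C_k$ as the new slit, is precisely a stretched and sheared copy of $\hat S_{n-1}$ (Figure \ref{PfofPropCBRkFamFig}). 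Your parabolic idea is salvageable and would even streamline this: the full Dehn twist in the horizontal cylinders is an affine automorphism of $\hat S_n$ fixing the slit pointwise (the moduli are commensurable), so a single embedded copy with non-horizontal slit direction, pushed forward by powers of the parabolic, yields the whole sequence of copies with directions tending to $0$. But that single initial copy is exactly the cylinder-complement construction above, and without it the induction has no engine.
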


\begin{proof}
We proceed by induction.  When $n=1$, the first element of the family is a slit torus, so the Cantor-Bendixson rank of $\Theta(\hat S_1)$ is at least three by Proposition \ref{TorCBR3Prop}.  The induction hypothesis assumes that $\Theta(\hat S_{n-1})$ has Cantor-Bendixson rank $n+1$.  We claim that, roughly speaking, $\hat S_n$ contains infinitely many ``copies'' of $\hat S_{n-1}$ that avoid the slit $a$.  Observe that there is an infinite sequence of cylinders contained in the bottom cylinder in $\hat S_n$ that contain the slit $a$ that are formed by considering trajectories from $a$ to itself, e.g. the white cylinder in Figure \ref{PfofPropCBRkFamFig}.  The infinite sequence of directions corresponding to these cylinders converges to the horizontal direction.  Consider the complement of each of these cylinders.  Since all horizontal cylinders consist of two squares with the possible exception of the top cylinder, which may only consist of one square, we get that the complement of the cylinder containing $a$ consists of a cylinder from $b$ to itself, $c$ to itself, etc. until we reach the top square, which must be entirely contained in a cylinder.  Then by marking the boundary of the cylinder from $b$ to itself along the boundary that is incident with the cylinder containing $a$, we see a stretched and sheared copy of $\hat S_{n-1}$ contained in $\hat S_n$, e.g. the union of shaded regions in Figure \ref{PfofPropCBRkFamFig}.  By the induction hypothesis and the existence of an infinite sequence of $\hat S_{n-1}$ converging in $\hat S_n$ to the horizontal direction, the Cantor-Bendixson rank of $\Theta(\hat S_n)$ must be at least one more than that of $\Theta(\hat S_{n-1})$.  Thus, it is at least $n+2$.
\end{proof}

\begin{figure}
\centering
\begin{tikzpicture}[scale=0.75]
\draw [thick] (0,0)--(0,4)--(4,4)--(4,8)--(8,8)--(12,8)--(12,4)--(8,4)--(8,0)--cycle;
\draw [thick] (4,0)--(8,1);
\draw [thick] (0,0)--(8,2);
\draw [thick] (0,1)--(8,3);
\draw [thick] (0,2)--(8,4);
\draw [thick] (0,3)--(4,4);
\draw [thick] (0,2)--(12,5);
\draw [thick] (0,3)--(12,6);
\draw [thick] (4,5)--(12,7);
\draw [thick] (4,6)--(12,8);
\draw [thick] (4,7)--(8,8);
\draw [very thick] (0,0)--(4,0);
\draw [very thick] (0,4)--(4,4);
\foreach \x in {(0,0),(0,4),(4,4),(4,8),(8,8),(12,8),(12,4),(8,4),(8,0)} \draw \x circle (1pt);
\draw(2,4) node[above] {a};
\draw(6,8) node[above] {b};
\draw(10,8) node[above] {c};
\draw(2,0) node[below] {a};
\draw(6,0) node[below] {b};
\draw(10,4) node[below] {c};
\path [fill=gray, fill opacity=.4] (4,0) -- (8,1) -- (8,0);
\path [fill=gray, fill opacity=.4] (0,0) -- (8,2) -- (8,3) -- (0,1);
\path [fill=gray, fill opacity=.4] (0,2) -- (12,5) -- (12,6) -- (0,3);
\path [fill=gray, fill opacity=.4] (4,5) -- (12,7) -- (12,8) -- (4,6);
\path [fill=gray, fill opacity=.4] (4,7) -- (8,8) -- (4,8);
\path [fill=blue, fill opacity=.4] (8,4) -- (12,5) -- (12,4);
\path [fill=blue, fill opacity=.4] (4,4) -- (12,6) -- (12,7) -- (4,5);
\path [fill=blue, fill opacity=.4] (4,6) -- (12,8) -- (8,8) -- (4,7);
\end{tikzpicture}
\caption{An example of $\hat S_3$ containing $\hat S_2$: The two cylinders of $\hat S_2$ are colored gray and blue}
\label{PfofPropCBRkFamFig}
\end{figure}
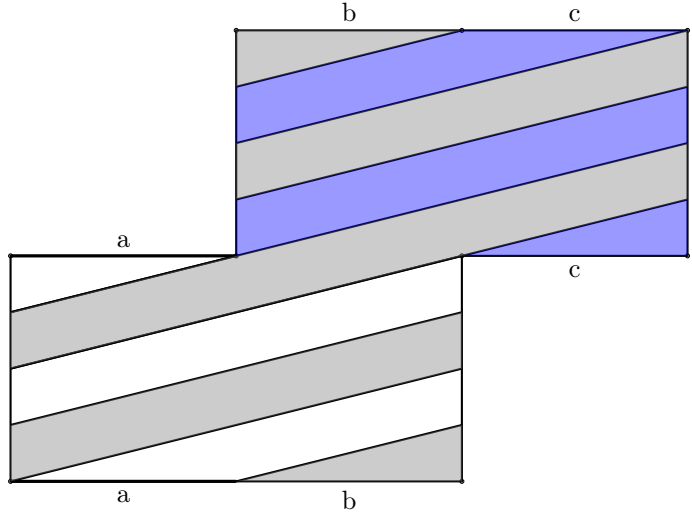

\section{Bounds on the Cantor-Bendixson Rank}
\label{BdCBRankSect}

We consider the concept of dimension for a space of translation surfaces with respect to individual translation surfaces so that we can define dimension for invariant components contained in a translation surface.  This quantity will be used in an induction argument to prove the main theorem.

Recall that $\Sigma$ is the union of the finite subset of points on $\hat X$ at which $\hat \omega$ has a singularity and the subset of points that are endpoints of slits.

\begin{definition}
Let $\Omega$ be a union of parallel invariant components in direction $\theta$ on a slit translation surface $(\hat X, \hat \omega)$.  Let $\Sigma' = \Omega \cap \Sigma$.  Define the dimension $d(\Omega)$ to be
$$d(\Omega) := \dim_{\bC} H_1(\Omega, \Sigma', \bC).$$
\end{definition}

It follows from the inclusion map $\iota: (\Omega, \Sigma') \hookrightarrow (\hat X, \Sigma)$, that there is an injection of homology $\iota_*: H_1(\Omega, \Sigma') \hookrightarrow H_1(\hat X, \Sigma)$, which implies that 
$$d(\Omega) \leq \dim_{\bC} \cH(\kappa).$$

We consider local period coordinates on a stratum relative to this subsurface.  Let $\{\gamma_1, \ldots, \gamma_m\}$ be a basis for $H_1(\Omega, \Sigma', \bZ)$.  The injection $\iota_*$ of first homology implies that this basis can be extended to a basis $\{\gamma_1, \ldots, \gamma_m, \gamma_{m+1} \ldots, \gamma_n \}$ for $H_1(\hat X, \Sigma, \bZ)$.  Then the period map
$$\Phi: (X, \omega) \mapsto \left( \int_{\gamma_1} \omega, \ldots, \int_{\gamma_n} \omega \right)$$
relative to this choice of basis provides coordinates for a neighborhood of $(X, \omega)$ as usual.  However, this choice has the additional property that the first $m$ coordinates completely parametrize the subsurface $\Omega$.  In particular, there is a local subspace given by fixing the first $m$ coordinates and letting the last $n-m$ coordinates vary that completely describes all local deformations that preserve $\Omega$.  Finally, we observe that if $(X, \omega) \in \cH(\kappa)$ is a translation surface, then regarding $(X, \omega)$ as a union of parallel invariant components in the vertical direction yields $d(X, \omega) = \dim_{\bC} \cH(\kappa)$.

\begin{lemma}
\label{LemInvCompImpSmallDim}
Let $(\hat X, \hat \omega)$ be a slit translation surface without boundary.  Let $\Omega$ be a union of parallel invariant components in direction $\theta$ contained in $(\hat X, \hat \omega)$ and disjoint from all of the slits of $(\hat X, \hat \omega)$.  If $\Omega' \subsetneq \Omega$ is a union of invariant components in direction $\theta' \not= \theta$, then $d(\Omega') < d(\Omega)$.
\end{lemma}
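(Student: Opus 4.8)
The plan is to upgrade the injection of relative homology already used in the excerpt into a \emph{strict} inequality, by exhibiting an explicit class in $H_1(\Omega,\Sigma';\bC)$ that does not lie in the image of $H_1(\Omega',\Sigma'';\bC)$, where $\Sigma'' = \Omega'\cap\Sigma$ and $\Sigma' = \Omega\cap\Sigma$. First I would record the weak inequality exactly as before: the inclusion of pairs $(\Omega',\Sigma'')\hookrightarrow(\Omega,\Sigma')$ induces a map $\iota_*\colon H_1(\Omega',\Sigma'';\bC)\to H_1(\Omega,\Sigma';\bC)$, and by the same reasoning used for the injection $\iota_*\colon H_1(\Omega,\Sigma')\hookrightarrow H_1(\hat X,\Sigma)$ above---namely that $\Omega'$ is bounded by leaves of the $\theta'$-foliation, so no nonzero relative cycle of $\Omega'$ can bound in $\Omega$---this map is injective. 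Hence $d(\Omega')\le d(\Omega)$, and everything reduces to showing that $\iota_*$ is not surjective.

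Next I would show that the transverse complement $N := \overline{\Omega\setminus\Omega'}$ has positive area. Since $\theta'\neq\theta$ and $\Omega'\subsetneq\Omega$ is a proper union of whole $\theta'$-invariant components sitting inside the $\theta$-foliated region $\Omega$, the two decompositions are genuinely transverse; a positive-area region cannot simultaneously be a $\theta$- and a $\theta'$-invariant-component decomposition when $\theta\neq\theta'$, so the leftover $N$ cannot be measure zero. I would then split on the marked points. If $\Sigma''\subsetneq\Sigma'$, i.e. some $z_0\in\Sigma$ lies in $N$, the extra class is immediate: for an arc $\gamma$ in $\Omega$ from $z_0$ to a point of $\Sigma''$, the connecting homomorphism $H_1(\Omega,\Sigma';\bC)\to\widetilde H_0(\Sigma';\bC)$ of the long exact sequence of the pair sends $[\gamma]$ to a class supported on $z_0\notin\Sigma''$, whereas it sends all of $\iota_*H_1(\Omega',\Sigma'';\bC)$ into the subspace spanned by $\Sigma''$. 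Thus $[\gamma]\notin\operatorname{im}\iota_*$ and $d(\Omega')<d(\Omega)$.

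The main obstacle is the remaining case $\Sigma''=\Sigma'$, in which the new class must come from the topology carried by $N$ rather than from a marked point; here I would argue through the algebraic intersection pairing. In this case the long exact sequences of the two pairs (together with the five lemma) reduce the problem to showing that $\iota_*\colon H_1(\Omega';\bC)\to H_1(\Omega;\bC)$ on absolute homology is not onto. I first note that $N$ is not a disk: a $\theta$-foliated disk of positive area would, by an index (Poincar\'e--Hopf) argument, force a zero of $\hat\omega$ in its interior, hence a point of $\Sigma$ in $N\setminus\partial N$, contradicting $\Sigma''=\Sigma'$. Therefore $N$ carries nontrivial topology, and I would produce a closed curve $\alpha\subset\operatorname{int}N$ disjoint from $\Omega'$ (a core of a $\theta$-cylinder of $\Omega$ contained in $N$, or a cycle inside a $\theta$-minimal component of $\Omega$ meeting $N$) together with a curve $\beta$ in $\Omega$ with $\langle\alpha,\beta\rangle\neq0$. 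Since $\alpha$ is disjoint from $\Omega'$, it pairs trivially with every class in $\operatorname{im}\iota_*$; as $\langle\alpha,\beta\rangle\neq0$, the class of $\beta$ is not in the image, so $\iota_*$ is not surjective and $d(\Omega')<d(\Omega)$.

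The delicate point---and where the hypotheses $\theta\neq\theta'$ and $\operatorname{area}(N)>0$ are used essentially---is guaranteeing the pair $(\alpha,\beta)$ with $\alpha$ genuinely disjoint from $\Omega'$ and a nonzero intersection. If $N$ happens to be a single annulus attached to $\Omega'$ along two of its boundary $\theta'$-leaves, I would instead take $\alpha$ to be the core of that annulus (disjoint from $\Omega'$) and $\beta$ the crosscut closing up through $\Omega'$, for which $\langle\alpha,\beta\rangle=\pm1$; the non-disk conclusion above rules out the only degenerate configuration. I expect verifying the existence of such a transverse test cycle in every admissible topological type of $N$ to be the one step needing genuine care, the rest being formal manipulation of the long exact sequence and the injection from Step 1.
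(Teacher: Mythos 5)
Your preliminary steps are sound: the weak inequality via injectivity, the positive-area claim for $N=\overline{\Omega\setminus\Omega'}$, the case $\Sigma''\subsetneq\Sigma'$ via the connecting homomorphism, and even the diagram chase reducing the case $\Sigma''=\Sigma'$ to absolute homology can all be made to work. The gap is the step ``$N$ is not a disk.'' A Poincar\'e--Hopf count forces a singularity inside a foliated disk only when the foliation is tangent to the entire boundary, whereas $\partial N$ is tangent to the $\theta$-foliation only along the arcs coming from $\partial\Omega$ and is \emph{transverse} to it along the arcs coming from $\partial\Omega'$ (precisely because $\theta'\neq\theta$); a flat rectangle foliated by lines parallel to two of its sides is a positive-area $\theta$-foliated disk with no zero of $\hat\omega$. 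And this configuration genuinely occurs. Take $V=[0,1]\times[0,2]$ with $(x,0)\sim(x,2)$ (a vertical cylinder of circumference $2$) and a unit square $R=[1,2]\times[0,1]$; glue the left edge of $R$ to $\{1\}\times[0,1]$, the right edge of $R$ to $\{0\}\times[1,2]$, and $\{1\}\times[1,2]$ to $\{0\}\times[0,1]$. Horizontal leaves close up after running through the lower half of $V$, then $R$, then the upper half of $V$, so the resulting surface $\Omega$ is the closure of a single horizontal cylinder of circumference $3$; its free boundary is the pair of horizontal edges of $R$, which you cap off with a horizontal cylinder $D$ carrying the slits to obtain a closed slit translation surface. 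With $\theta$ horizontal, $\theta'$ vertical, and $\Omega'=\overline{V}$ (a vertical invariant component contained in $\Omega$), both cone points of $\Omega$ lie on $\partial\Omega'$, so $\Sigma''=\Sigma'$, while $\Omega\setminus\Omega'$ is a contractible half-open strip (the square $R$ minus its two vertical sides). Here $d(\Omega')=3<4=d(\Omega)$, so the lemma holds, but every closed curve in $\Omega$ disjoint from $\Omega'$ is contractible in $\Omega\setminus\Omega'$ and therefore pairs trivially with all of $H_1(\Omega,\Sigma';\bC)$: no pair $(\alpha,\beta)$ of the kind your argument requires exists.

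The failure is structural, not an artifact of the example: in this situation the class witnessing strictness is (a multiple of) a boundary circle of $\Omega$ --- essentially the paper's class $v$, the sum of the boundary saddle connections of $\Omega$ --- and such boundary-parallel classes lie in the radical of the intersection pairing of $\Omega$, so no amount of intersecting with closed curves inside $\Omega$ can certify their independence. This is exactly what the paper's proof is engineered around: $v$ is supported in the complement of $\Omega'$, and one shears that complement in period coordinates (upper-triangular deformations preserving the $\theta'$-foliation), which moves the period of $v$ while fixing every period coming from $H_1(\Omega',\Sigma'';\bC)$; linear independence then follows with no hypothesis on the topology of $N$. If you want to salvage the intersection-theoretic approach, you must enlarge the class of test cycles: either allow $\alpha$ to be an arc rel $\partial\Omega$ (Lefschetz duality for a surface with boundary --- in the example above, the vertical crosscut of $R$ meets the horizontal core curve of $\Omega$ exactly once), or allow $\alpha$ to be a closed curve in $\hat X\setminus\Omega'$ that is permitted to leave $\Omega$ (the vertical circle through $R\cup D$). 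As written, with $\alpha$ a closed curve in $\operatorname{int}N$, the case $\Sigma''=\Sigma'$ in which $N$ is a union of transit rectangles running into $\partial\Omega$ is a genuine hole in the proof.
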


\begin{proof}
By the assumption that $\Omega' \subset \Omega$, it is clear that $d(\Omega') \leq d(\Omega)$ because linear independence is relative to the same surface $(\hat X, \hat \omega)$ for both sets of invariant components.  Let $\Sigma'' = \Sigma \cap \Omega'$.  Hence, it suffices to produce a vector $v \in H_1(\Omega, \Sigma', \bC)$ that is not contained in $H_1(\Omega', \Sigma'', \bC)$ that is also linearly independent from every vector in $H_1(\Omega', \Sigma'', \bC)$.  Since $\theta \not= \theta'$ and $\Omega' \subset \Omega$, the boundary of $\Omega$ is not contained in $\Omega'$.  The boundary of $\Omega$ consists of a union of parallel saddle connections because this is true of the boundary of any invariant component.

Let $\Omega'^c = (\hat X, \hat \omega) \setminus \Omega'$, and let $\Sigma''^c = \Sigma \setminus \Sigma''$.  Let $v$ be the vector given by considering the sum of all of the boundary saddle connections of $\Omega$ corresponding to the elements of $H_1(\Omega, \Sigma', \bC)$.  Without loss of generality, let the boundary of $\Omega'$ lie in the vertical foliation.  Then by definition of $v$, $v \in H_1(\Omega, \Sigma', \bC)$ and by the observation above, $v \in H_1(\Omega'^c, \Sigma''^c, \bC)$.  If we deform $\Omega'^c$, while fixing $\Omega'$, by multiplying the period vectors contained in $\Omega'^c$ by the upper triangular matrices of $\text{GL}_2(\bR)$ in a neighborhood of the identity of $\text{GL}_2(\bR)$ that fixes the vertical foliation, then we see that $v$ can be varied in a $1$-complex dimensional space that is independent of any vector in $H_1(\Omega', \Sigma'', \bC)$.  Hence, the observation preceding this lemma implies that $v$ represents a vector in $H_1(\Omega, \Sigma', \bC)$ that is linearly independent from the vectors of $H_1(\Omega', \Sigma'', \bC)$.  Thus, $d(\Omega') < d(\Omega)$.
\end{proof}

\begin{definition}
Let $\{\Omega(\theta_n)\}$ be a sequence of invariant components contained in a slit translation surface $(\hat X, \hat \omega)$ such that the sequence $\{\theta_n\}$ converges to $\theta'$.  Define the \emph{$\omega$-limit set} of a sequence of invariant components to be
$$\Omega(\{\Omega(\theta_n)\}) := \bigcap_{n=1}^{\infty} \overline{\bigcup_{k \geq n}\Omega(\theta_k)}.$$
We say that $\{\Omega(\theta_n)\}$ \emph{accumulates to} $\Omega(\{\Omega(\theta_n)\})$.
\end{definition}

The following lemma can be proven with the exact same argument as Lemma \ref{InfSCImpFinCyl}.

\begin{lemma}
\label{GenOmLimSet}
Given a slit translation surface $(\hat X, \hat \omega)$ containing an infinite sequence of invariant components $\{\Omega(\theta_n)\}$, the set $\Omega(\{\Omega(\theta_n)\})$ is a union of parallel invariant components.
\end{lemma}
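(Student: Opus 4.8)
The plan is to run the proof of Lemma~\ref{InfSCImpFinCyl} essentially verbatim, with full leaves of the invariant components $\Omega(\theta_n)$ playing the role that the saddle connections $s_n$ played there. First I would perform the same opening reduction: pass to a subsequence so that the directions $\{\theta_n\}$ converge monotonically to $\theta'$. Write $\Omega := \Omega(\{\Omega(\theta_n)\})$; as in the original, $\Omega$ is nonempty and closed by compactness of $\hat X$. The central claim, exactly parallel to the one in Lemma~\ref{InfSCImpFinCyl}, is that $\Omega$ is saturated by the leaves of $\cF_{\theta'}$: if $p \in \Omega$ is not a point of $\Sigma$, then the entire leaf of $\cF_{\theta'}$ through $p$ lies in $\Omega$.

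To prove this saturation I would argue by continuity of the directional flow. Such a point $p$ is a limit of regular points $p_k \in \Omega(\theta_{n_k})$. Since each $\Omega(\theta_{n_k})$ is a union of full leaves of $\cF_{\theta_{n_k}}$, flowing $p_k$ in direction $\theta_{n_k}$ for a fixed short time stays inside $\Omega(\theta_{n_k})$ until it meets a singularity. Because the directional flow depends continuously on both basepoint and direction away from $\Sigma$, an initial segment of the $\theta'$-trajectory through $p$ is a limit of such segments, hence lies in $\overline{\bigcup_{k \geq N}\Omega(\theta_k)}$ for every $N$ and therefore in $\Omega$. Propagating this along the whole trajectory exactly as in Lemma~\ref{InfSCImpFinCyl} -- continuing until the $\theta'$-trajectory reaches a singularity $z_1$, recording the resulting saddle connection $\sigma_1 \subset \Omega$ in direction $\theta'$, and repeating -- and invoking that a fixed direction carries only finitely many saddle connections, the process terminates either in a semi-infinite trajectory, whose closure is a union of invariant components by \cite[$\S$3.4]{VianaIETBook}, or by closing up into the boundary of a cylinder $C \subseteq \Omega$. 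Either way $\Omega$ is exhibited as a union of invariant components all lying in the single direction $\theta'$, which is precisely a union of \emph{parallel} invariant components. Note that, in contrast to Lemma~\ref{InfSCImpFinCyl}, I do not need the secondary conclusion producing a finite cylinder disjoint from the slits, so the appeal to \cite[Cor. 4.4]{AulicinoCompDegKZ} is unnecessary here; finiteness of the union is in any case automatic, as a fixed direction supports only finitely many invariant components.

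The main obstacle, as in the original, is controlling the limiting trajectory at the singular set $\Sigma$ (which under Convention~\ref{SlitTransConv} includes the slit endpoints): I must ensure the continuity argument survives passage through a singularity and that the build-up of saddle connections genuinely terminates rather than wandering indefinitely. This is handled by exactly the two mechanisms used in Lemma~\ref{InfSCImpFinCyl}: the finiteness of the set of saddle connections in the fixed direction $\theta'$ forces termination, and the monotone convergence $\theta_n \to \theta'$ guarantees that the approximating leaves accumulate onto the $\theta'$-leaf from a single side, so the leaf cannot be deflected past a singularity without leaving a matching $\theta'$-saddle connection inside $\Omega$. Since every ingredient is identical to the established argument, I expect the write-up to reduce to the single remark that the proof of Lemma~\ref{InfSCImpFinCyl} applies mutatis mutandis.
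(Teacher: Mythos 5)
Your proposal matches the paper: the paper's entire proof of this lemma is the single remark that it can be proven by the exact same argument as Lemma~\ref{InfSCImpFinCyl}, which is precisely what you carry out, with the flow-continuity/saturation argument correctly standing in for the common-endpoint reduction that is unavailable when the approximating objects are invariant components rather than saddle connections. Your simplifications (dropping the finite-cylinder conclusion and the appeal to \cite[Cor. 4.4]{AulicinoCompDegKZ}) are also consistent with the statement, which asks only for a union of parallel invariant components.
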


We would like to proceed by induction on the dimension of invariant components.  However, we are missing a key ingredient.  While $\Omega(\{\Omega(\theta_n)\})$ is a union of invariant components, it is not at all obvious that any individual invariant component in the sequence is contained in the $\omega$-limit set $\Omega(\{\Omega(\theta_n)\})$.  In fact, for general dynamical systems, this is false.  The following lemma resolves this issue.

\begin{lemma}
\label{SubseqContOmLimSet}
Given a slit translation surface $(\hat X, \hat \omega)$ containing an infinite sequence of invariant components $\{\Omega(\theta_n)\}$ each of which is a proper subset of $(\hat X, \hat \omega)$, there exists a subsequence $\{\Omega(\theta_{n_k})\}$ such that for all $k$, $\Omega(\theta_{n_k}) \subsetneq \Omega(\{\Omega(\theta_n)\})$.
\end{lemma}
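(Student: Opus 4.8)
The plan is to reduce the assertion to a single extraction. Since the $\omega$-limit set $\Omega_\infty := \Omega(\{\Omega(\theta_n)\})$ is unchanged under passing to a tail of the sequence, it suffices to prove that for every $N$ there is an index $n \geq N$ with $\Omega(\theta_n) \subsetneq \Omega_\infty$; collecting one such index beyond each $N$ then produces the desired subsequence. First I would pass to a subsequence along which the directions $\theta_n$ are distinct and converge monotonically to some $\theta'$, and recall from Lemma \ref{GenOmLimSet} that $\Omega_\infty$ is a union of parallel invariant components in the direction $\theta'$. I would also record that, because the boundary of each $\Omega(\theta_n)$ is a union of saddle connections in the direction $\theta_n \neq \theta'$, distinct members of the sequence share no boundary saddle connection; since only finitely many holonomy vectors of bounded length occur in a fixed neighborhood of the direction $\theta'$, the boundary saddle connections of $\Omega(\theta_n)$ cannot all remain of bounded length.

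The next step is to show that the components converge to $\Omega_\infty$ in measure, namely $\text{Area}\big(\Omega(\theta_n)\setminus \Omega_\infty\big) \to 0$. This follows directly from the definition of the $\omega$-limit set: every point of the open set $\hat X \setminus \Omega_\infty$ possesses a neighborhood meeting only finitely many of the $\Omega(\theta_k)$, so any compact subset $K \subset \hat X \setminus \Omega_\infty$ is disjoint from $\Omega(\theta_k)$ for all large $k$, and exhausting the complement of $\Omega_\infty$ by such compacta forces the area of the part of $\Omega(\theta_n)$ lying outside $\Omega_\infty$ to tend to zero. Decomposing $(\hat X, \hat\omega)$ in the direction $\theta'$, I write $W_1, \ldots, W_t$ for the invariant components not contained in $\Omega_\infty$, so that, up to the measure-zero union of the boundary saddle connections, $\hat X \setminus \Omega_\infty$ is the disjoint union of the interiors of the $W_j$.

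The heart of the argument, and the step I expect to be the main obstacle, is to upgrade this convergence in measure to genuine set-theoretic containment. For each complementary component $W_j$ I would aim to show $\Omega(\theta_n) \cap \text{int}(W_j) = \emptyset$ for infinitely many $n$. The favorable mechanism is that the boundary saddle connections of $\Omega(\theta_n)$, which lie in the direction $\theta_n$ nearly parallel to $\partial W_j$ (which itself lies in direction $\theta'$), should converge to $\partial \Omega_\infty$ from inside $\Omega_\infty$ and thereby fence $\Omega(\theta_n)$ away from each $W_j$. The precise obstruction to overcome is a vanishing wedge: near a singularity $z$ on $\partial\Omega_\infty$, a boundary saddle connection of $\Omega(\theta_n)$ emanating from $z$ makes a small angle with $\partial\Omega_\infty$, and a thin triangular sliver of $\Omega(\theta_n)$ could in principle protrude into $\text{int}(W_j)$ with area tending to $0$, so that $\Omega(\theta_n)$ fails to be contained in $\Omega_\infty$ while still respecting the measure estimate. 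I would rule this out in two stages: first, the monotone convergence $\theta_n \to \theta'$ fixes the side from which the components approach $\partial\Omega_\infty$, and a persistent family of protruding slivers occupying a fixed portion of $\text{int}(W_j)$ is impossible because each point of $\text{int}(W_j)$ lies in only finitely many $\Omega(\theta_k)$; second, for genuinely shrinking slivers I would combine the fact that $\Omega(\theta_n)$ lies on the $\Omega_\infty$-side of its own boundary saddle connection with the measure estimate to conclude that, after discarding finitely many indices, the boundary saddle connections already separate $\Omega(\theta_n)$ from every $W_j$. This yields $\Omega(\theta_n) \subseteq \Omega_\infty$ up to its measure-zero boundary, hence as a subsurface; properness is automatic, since $\Omega_\infty$ carries leaves in the direction $\theta' \neq \theta_n$ that do not belong to $\Omega(\theta_n)$. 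Selecting one qualifying index beyond each $N$ assembles the required subsequence.
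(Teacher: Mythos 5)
Your reduction and your convergence-in-measure step are sound: the definition of the $\omega$-limit set does imply that every compact subset of $\hat X \setminus \Omega_\infty$ meets only finitely many $\Omega(\theta_k)$, and exhausting the complement by compacta does give $\text{Area}\bigl(\Omega(\theta_n)\setminus\Omega_\infty\bigr)\to 0$. But the step you yourself flag as the main obstacle --- upgrading convergence in measure to set-theoretic containment --- is never actually carried out. The phrase you lean on, ``the fact that $\Omega(\theta_n)$ lies on the $\Omega_\infty$-side of its own boundary saddle connection,'' is not an established fact; it is a restatement of the containment you are trying to prove. Nothing in your two stages rules out the following scenario: for every large $n$, some trajectory of $\Omega(\theta_n)$ crosses a boundary saddle connection $\sigma\subset\partial\Omega_\infty$ transversally (perfectly possible, since $\sigma$ lies in direction $\theta'$ and the trajectory in direction $\theta_n\neq\theta'$), and the protruding part of $\Omega(\theta_n)$ stays inside a neighborhood of $\partial\Omega_\infty$ that shrinks with $n$. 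Such slivers have vanishing area and avoid every fixed compact subset of the complement, so both the measure estimate and the finite-intersection property are silent about them; your stage one and stage two, examined closely, only re-derive those two facts.

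Closing this gap requires a dynamical argument, not a measure-theoretic one, and this is exactly where the paper's proof does its work: assuming for contradiction that all but finitely many $\Omega(\theta_n)$ fail to be contained in $\Omega_\infty$, it passes to a subsequence whose members all send trajectories across a single boundary saddle connection $\sigma$ of $\Omega_\infty$, and then reruns the limit-set argument of Lemma \ref{InfSCImpFinCyl} on the trajectories emanating from $\sigma$: since their directions converge to $\theta'$, they accumulate on an invariant component, or on a cylinder bounded by saddle connections, lying on the far side of $\sigma$; by the definition of the $\omega$-limit set this accumulation set must lie in $\Omega_\infty$, contradicting that $\sigma$ is on its boundary. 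Some argument of this type (following trajectories across $\sigma$ and identifying what they accumulate on) is unavoidable, and it is precisely what your proposal omits. Two smaller points: the lemma only asks for a subsequence, so the correct statement to contradict is ``all but finitely many protrude,'' whereas your plan asserts the stronger cofinite containment without justification; and your final properness claim is too quick, since a subsurface can be an invariant component in two different directions (the whole surface is one), so you should instead argue, as the paper does, that when $\Omega_\infty\neq\hat X$ its boundary consists of saddle connections in direction $\theta'$, which no invariant component in direction $\theta_{n_k}\neq\theta'$ can cross or have as boundary, while when $\Omega_\infty=\hat X$ properness holds by hypothesis.
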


\begin{proof}
By contradiction, assume that there exists $N$ such that for all $n \geq N$, $\Omega(\theta_n) \not\subset \Omega(\{\Omega(\theta_n)\})$.  We pass to a subsequence of $\{\Omega(\theta_n)\}$ with the property that for all $k$, there exists a trajectory in each of the invariant components in the sequence $\{\Omega(\theta_{n_k})\}$ that intersects a saddle connection $\sigma$ in the boundary of $\Omega(\{\Omega(\theta_n)\})$.  From here we follow the proof of Lemma \ref{InfSCImpFinCyl} and use the trajectories emanating from the saddle connection $\sigma$ to produce either an invariant component not contained in $\Omega(\{\Omega(\theta_n)\})$, a contradiction, or a closed union of saddle connections bordering a cylinder that was not assumed to be contained in $\Omega(\{\Omega(\theta_n)\})$, yet the argument from Lemma \ref{InfSCImpFinCyl} proves that it is, again yielding a contradiction.  This contradiction proves that there is no such $N$ assumed in the contradiction assumption above.  Therefore, the sequence $\{\Omega(\theta_n)\}$ admits a subsequence $\{\Omega(\theta_{n_k})\}$ of invariant components such that for all $k$, $\Omega(\theta_{n_k}) \subseteq \Omega(\{\Omega(\theta_n)\})$.

Since $\Omega(\theta_{n_k})$ is a proper subset of the surface, if $\Omega(\{\Omega(\theta_n)\}) = (\hat X, \hat \omega)$, then we conclude.  Otherwise, $\Omega(\{\Omega(\theta_n)\})$ has boundary.  By abuse of notation, pass to a subsequence if necessary such that $\{\theta_{n_k}\}$ converges to $\theta'$.  Since $\theta_{n_k} \not= \theta'$ and no trajectory of angle $\theta_{n_k}$ can pass through the boundary of $\Omega(\{\Omega(\theta_n)\})$ for all $k$, $\Omega(\theta_{n_k})$ must indeed be a proper subset of $\Omega(\{\Omega(\theta_n)\})$ for all $k$.
\end{proof}

The following corollary follows from Lemmas \ref{LemInvCompImpSmallDim}, \ref{GenOmLimSet}, and \ref{SubseqContOmLimSet}, and the definition of an $\omega$-limit set of a sequence of invariant components.

\begin{corollary}
\label{CplxInc}
If $\{\Omega(\theta_n)\}$ accumulates to $\Omega$, then there exists a subsequence $\{\Omega(\theta_{n_k})\}$ such that for all $k$, $d(\Omega(\theta_{n_k})) < d(\Omega)$.
\end{corollary}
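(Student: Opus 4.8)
The plan is to chain the three preceding lemmas together; essentially all of the substance lives in those lemmas, so the work here is only to check that their hypotheses align. Write $\Omega = \Omega(\{\Omega(\theta_n)\})$ for the $\omega$-limit set, and let $\theta'$ be the limit of the directions $\theta_n$.

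First I would apply Lemma \ref{GenOmLimSet} to record that $\Omega$ is a union of parallel invariant components. Because $\Omega$ is assembled as a limit of the components $\Omega(\theta_n)$ in directions $\theta_n \to \theta'$, its common direction is $\theta'$; moreover the interiors of its constituent invariant components are disjoint from the slits, since the same is true of every $\Omega(\theta_n)$ and this property passes to the limit exactly as in the proof of Lemma \ref{InfSCImpFinCyl}. This verifies the standing hypotheses needed for $\Omega$ to play the role of the larger component in Lemma \ref{LemInvCompImpSmallDim}.

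Next I would invoke Lemma \ref{SubseqContOmLimSet} to pass to a subsequence $\{\Omega(\theta_{n_k})\}$ with $\Omega(\theta_{n_k}) \subsetneq \Omega$ for every $k$. That lemma requires each term of the sequence to be a proper subset of $(\hat X, \hat \omega)$, which is the situation in which the corollary is applied; if finitely many terms are the whole surface one simply discards them before extracting the subsequence. Refining once more (discarding the finitely many indices, if any, with $\theta_{n_k} = \theta'$, which is possible since $\theta_{n_k} \to \theta'$), I may assume $\theta_{n_k} \neq \theta'$ for all $k$.

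Finally, for each $k$ I would apply Lemma \ref{LemInvCompImpSmallDim} with larger component $\Omega$ (direction $\theta'$, disjoint from the slits) and smaller component $\Omega(\theta_{n_k}) \subsetneq \Omega$ (direction $\theta_{n_k} \neq \theta'$), obtaining $d(\Omega(\theta_{n_k})) < d(\Omega)$. As this holds for every $k$, the claim follows. The only delicate point, and thus the main (if modest) obstacle, is the bookkeeping of directions: Lemma \ref{LemInvCompImpSmallDim} produces a strict drop in dimension only when the two directions genuinely differ, so one must retain only those terms whose direction is distinct from the limiting direction $\theta'$ of $\Omega$. Taking the common refinement of the two subsequence extractions above handles this uniformly in $k$.
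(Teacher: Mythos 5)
Your argument is correct and is essentially identical to the paper's own proof: the paper disposes of this corollary in a single sentence, asserting that it follows from Lemmas \ref{LemInvCompImpSmallDim}, \ref{GenOmLimSet}, and \ref{SubseqContOmLimSet} together with the definition of the $\omega$-limit set, which is exactly the chain of applications (and the hypothesis bookkeeping) that you spell out. One small caveat: your claim that only finitely many indices satisfy $\theta_{n_k} = \theta'$ does not follow from convergence alone (a convergent sequence may equal its limit infinitely often), but rather from the implicit assumption that the directions $\theta_n$ are pairwise distinct --- which holds in the paper's application, where they are distinct points of a derived set accumulating at $\theta'$ --- and in the degenerate case of infinitely many repeats the corollary as stated is itself false (consider a constant sequence), so this is a defect inherited from the statement rather than a gap in your proof.
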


Let $(\hat X, \hat \omega)$ be a slit translation surface without boundary.  After forgetting the slits, but not the endpoints of the slits, we get a translation surface $(X, \omega)$ without boundary in a stratum $\cH(\kappa)$.  We say that $\cH(\kappa)$ is the \emph{stratum associated to $(\hat X, \hat \omega)$}.  Clearly, $d(\hat X, \hat \omega) = \dim_{\bC}\cH(\kappa)$.

\begin{theorem}
\label{CBRankBd}
Let $(\hat X, \hat \omega)$ be a slit translation surface associated to the stratum $\cH(\kappa)$.  Let $r$ denote the Cantor-Bendixson rank of $\Theta(\hat X, \hat \omega)$.  If $d = \dim_{\bC} \cH(\kappa)$, then
$$1 \leq r \leq d.$$
\end{theorem}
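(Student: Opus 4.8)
The plan is to prove the bounds $1 \leq r \leq d$ separately. The lower bound $r \geq 1$ is already handled by Lemma \ref{SCAlwExistLem}, which guarantees $\Theta(\hat X, \hat\omega) \neq \emptyset$ and is non-perfect, forcing the Cantor-Bendixson rank to be at least one. So the entire substance lies in the upper bound $r \leq d$, where $d = \dim_{\bC}\cH(\kappa)$ is the dimension of the stratum associated to $(\hat X, \hat\omega)$. The natural strategy here is an induction on the dimension $d(\Omega)$ of invariant components, using Corollary \ref{CplxInc} as the engine that makes the induction decrease.

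First I would set up the induction on the integer $d$. The key structural fact is that every accumulation of saddle connection directions arises, by Lemma \ref{InfSCImpFinCyl}, from an infinite sequence of saddle connections accumulating to a union of parallel invariant components $\Omega$; and more generally, by Lemma \ref{GenOmLimSet}, sequences of invariant components accumulate to unions of parallel invariant components. The crucial point is Corollary \ref{CplxInc}: if a sequence $\{\Omega(\theta_n)\}$ accumulates to $\Omega$, then a subsequence satisfies $d(\Omega(\theta_{n_k})) < d(\Omega)$. This gives a strict decrease in dimension each time we pass to a deeper level of accumulation. I would argue that taking the $k$-th derived set $\Theta^{*k}$ corresponds to retaining only those directions that arise as $k$-fold iterated accumulation points, and each such iteration forces us into invariant components of strictly smaller dimension. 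Since the dimension is a non-negative integer bounded above by $d = d(\hat X, \hat\omega)$, after at most $d$ steps the dimension cannot decrease further, so the derived sets must stabilize; hence $\Theta^{*d+1} = \Theta^{*d}$, giving $r \leq d$.

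More precisely, I would formalize the claim that a point $\theta' \in \Theta^{*k}$ forces the existence of a union of invariant components $\Omega$ in direction $\theta'$ with $d(\Omega) \geq k+1$ (or an analogous bookkeeping inequality matching the indexing convention for Cantor-Bendixson rank), built up by nesting the $\omega$-limit sets. The base case records that any isolated accumulation corresponds to an invariant component of dimension at least one, and each derivation increments the guaranteed dimension by Corollary \ref{CplxInc}. When the dimension reaches its maximum possible value $d$, no further strict increase is possible, so $\Theta^{*d}$ can contain no non-isolated points coming from strictly higher-dimensional accumulation, forcing stabilization by stage $d$.

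The hard part will be matching the combinatorics of the Cantor-Bendixson derivation precisely to the dimension filtration—that is, verifying that each successive derived set $\Theta^{*k} \setminus \Theta^{*k+1}$ really does correspond to accumulation onto invariant components of a fixed dimension, so that the strict inequality of Corollary \ref{CplxInc} translates into the correct decrement. One must be careful that the invariant component $\Omega$ to which a sequence accumulates genuinely \emph{contains} the earlier invariant components (this is exactly the content of Lemma \ref{SubseqContOmLimSet}, needed to make the nesting $\Omega(\theta_{n_k}) \subsetneq \Omega$ honest) so that the dimensions are comparable within the same ambient surface. A subtle point to address is the transition from \emph{saddle connection directions} (the objects in $\Theta$) to \emph{invariant components} (the objects carrying the dimension $d$); one must confirm that every limit direction in $\Theta$ is witnessed by an invariant component of the appropriate dimension, which is precisely what Lemma \ref{InfSCImpFinCyl} and Theorem \ref{DoubPoleClosed} were built to supply, and that the boundary saddle connections of these components keep the limiting directions inside $\Theta$ so that closedness (Corollary \ref{SlitTransClsdSCDirs}) is respected at every stage.
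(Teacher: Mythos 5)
Your proposal is correct and takes essentially the same approach as the paper: an induction showing that every point of $\Theta(\hat X, \hat \omega)^{*k}$ is witnessed by a union of parallel invariant components of dimension at least $k+1$, with Lemma \ref{InfSCImpFinCyl} supplying the base case, Lemmas \ref{GenOmLimSet} and \ref{SubseqContOmLimSet} together with Corollary \ref{CplxInc} driving the inductive step, and stabilization forced once the dimension reaches $d = \dim_{\bC}\cH(\kappa)$. The only slip is verbal: accumulation makes the dimension of the limiting component strictly \emph{larger} than that of the approximating components (an increase, not a ``strict decrease''), which is what your own formal claim correctly records.
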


\begin{proof}
We proceed by induction on the dimension of the invariant components contained in $(\hat X, \hat \omega)$.  If $\Theta(\hat X, \hat \omega)$ is finite, then we are done.  Consider a sequence of saddle connections associated to the points in $\Theta(\hat X, \hat \omega)$.  By Lemma \ref{InfSCImpFinCyl}, any sequence of saddle connections contains a subsequence converging to a union of invariant components.  All invariant components have dimension at least two (where dimension two is realized by a cylinder with one marked point in each of its boundaries).  Hence, every element in $\Theta(\hat X, \hat \omega)^*$ represents the direction of an invariant component with dimension at least two.  By the induction hypothesis, assume that every element of $\Theta(\hat X, \hat \omega)^{*(k-1)}$ represents a direction containing an invariant component with dimension at least $k$.  If there is a sequence of points in $\Theta(\hat X, \hat \omega)^{*(k-1)}$, then they correspond to a sequence of invariant components with dimension at least $k$ by assumption and they accumulate to a union of parallel invariant components $\Omega$ by Lemma \ref{GenOmLimSet}.  Then $\Omega$ has dimension at least $k+1$ by Corollary \ref{CplxInc}.  Since the dimension of $(\hat X, \hat \omega)$ is bounded above by the dimension of $\cH(\kappa)$, this process must terminate after at least $d$ iterations in which case the Cantor-Bendixson rank is bounded above by $d$.
\end{proof}

In light of the discussion in Section \ref{AppsSurgCBRkSect}, we see that Theorem \ref{FinCBRankMainThm} follows from Theorem \ref{CBRankBd}.

\section{The Examples $\hat S_n$ Revisited}

This allows us to revisit the family of slit translation surfaces $\hat S_n$ in the previous section and conclude

\begin{corollary}
\label{CBRankFamExact}
The Cantor-Bendixson rank of $\Theta(\hat S_n)$ is $n+2$.
\end{corollary}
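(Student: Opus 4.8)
The plan is to combine the two bounds that have already been established for this family. Proposition \ref{CBRankFam} shows that the Cantor-Bendixson rank of $\Theta(\hat S_n)$ is \emph{at least} $n+2$, so it remains only to establish the matching upper bound $r \leq n+2$. For this I would invoke Theorem \ref{CBRankBd}, which gives $r \leq d = \dim_{\bC}\cH(\kappa)$ for the stratum $\cH(\kappa)$ associated to the slit translation surface. Thus the entire corollary reduces to computing the complex dimension of the ambient stratum for each $\hat S_n$ and checking that it equals $n+2$.

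The key computation is therefore dimensional. As stated in the discussion opening Section \ref{SnExSect}, $\hat S_n$ consists of $n+1$ squares and lies in a stratum of complex dimension $n+2$; concretely, $\hat S_n \in \cH(n)$ when $n$ is even and $\hat S_n \in \cH\!\left(\tfrac{n-1}{2},\tfrac{n-1}{2}\right)$ when $n$ is odd. I would verify this by recalling the standard dimension formula for a stratum $\cH(\kappa)$ with $\kappa = (k_1,\ldots,k_s)$ a partition of $2g-2$, namely $\dim_{\bC}\cH(\kappa) = 2g + s - 1$. In the even case there is a single zero, so $s=1$ and $2g-2 = n$ gives $g = (n+2)/2$, whence $\dim_{\bC}\cH(n) = 2g + 1 - 1 = n+2$. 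In the odd case there are two zeros, so $s=2$ and $2g-2 = (n-1)/2 + (n-1)/2 = n-1$ gives $g = (n+1)/2$, whence $\dim_{\bC}\cH\!\left(\tfrac{n-1}{2},\tfrac{n-1}{2}\right) = 2g + 2 - 1 = (n+1) + 1 = n+2$. In both parities the associated stratum has complex dimension exactly $n+2$.

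Putting these together, Theorem \ref{CBRankBd} yields $r \leq n+2$, while Proposition \ref{CBRankFam} yields $r \geq n+2$, so $r = n+2$ as claimed. The only genuine content beyond quoting the two prior results is confirming the zero orders and the genus of each $\hat S_n$; I would regard the main (though still modest) obstacle as justifying the claimed order of the zero(s), i.e.\ verifying how the cone angles at the marked vertices accumulate under the edge identifications of Figure \ref{FamCBRkn}. One checks that identifying the bold slit $a$ together with the labelled horizontal edges produces a single cone point (even $n$) or two cone points (odd $n$) with the stated angles, and that the Euler-characteristic bookkeeping $\sum k_i = 2g-2$ is consistent with $n+1$ squares. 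Since this verification is already asserted as checkable in Section \ref{SnExSect}, the corollary follows immediately once the dimension count above is in place.
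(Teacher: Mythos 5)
Your proposal is correct and is exactly the paper's (implicit) argument: the corollary is obtained by combining the lower bound of Proposition \ref{CBRankFam} with the upper bound of Theorem \ref{CBRankBd}, using the fact stated in Section \ref{SnExSect} that $\hat S_n$ lies in $\cH(n)$ (for $n$ even) or $\cH\bigl(\tfrac{n-1}{2},\tfrac{n-1}{2}\bigr)$ (for $n$ odd), each of complex dimension $n+2$. Your dimension count via $\dim_{\bC}\cH(\kappa) = 2g+s-1$ is accurate in both parities (including the degenerate case $n=1$, where $\cH(0,0)$ records the slit endpoints as marked points), so nothing further is needed.
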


This proves that the upper bound in Theorem \ref{CBRankBd} is in fact tight.  We leave it to the reader to produce examples from each $\hat S_n$ by adding only vertical and horizontal slits on the boundaries of the rectangular depictions of the cylinders in Figure \ref{FamCBRkn} so that every intermediate Cantor-Bendixson rank is realized in each stratum containing $\hat S_n$.  We conclude with an open problem.

\

\noindent \textbf{Open Problem.} Can the maximum Cantor-Bendixson rank established in Theorem \ref{CBRankBd} be realized by a slit translation surface associated to every connected component of a stratum of Abelian differentials?

\bibliography{fullbibliotex}{}

\end{document}